\documentclass[12pt,a4paper]{amsart}

\linespread{1.1}

\usepackage{enumitem}
\usepackage{hyperref}

\usepackage{amsmath}

 \newtheorem{theor}{Theorem}[section]
 \newtheorem{corol}[theor]{Corollary}
 \newtheorem{lemma}[theor]{Lemma}   
 \newtheorem{prop}[theor]{Proposition}
 \newtheorem{example}[theor]{Example}
 \newtheorem{claim}[theor]{Claim}
 \newtheorem{dfn}[theor]{Definition}
 
 \theoremstyle{remark}
 \newtheorem*{remark}{Remark}

 \newtheorem{question}[theor]{\sc Problem}

\newcommand{\real }{{\mathbb R}}
\newcommand{\complex }{{\mathbb C}}

\newcommand{\semi}{{\real_{2}^{4}}}

\newcommand {\lpr}[2]{{\langle}{#1},{#2}{\rangle}}

\newcommand {\D}{{\mathcal D}}

\DeclareMathOperator{\Span}{span}

\title[Minimal Spacelike Surfaces in $\semi$]{A Cauchy Problem for
  minimal spacelike surfaces in $\semi$}
\author[A. Lymberopoulos, A. P. Franco Filho and
A. E. Paternina]{Alexandre Lymberopoulos\and Ant\^onio de P\'adua Franco
  Filho\and Anuar Enrique Paternina Montalvo}

\address{Departamento de Matem\'atica - Universidade de S\~ao Paulo}
\email{lymber@ime.usp.br, apadua@ime.usp.br, anuarep@ime.usp.br}
\keywords{Isoclinic surfaces, critical spacelike surfaces, neutral
  space, Björling problem}
\subjclass[2010]{Primary: 53B30}

\begin{document}

\begin{abstract}
  We give a definition of isoclinic parametric surfaces in $\semi$ and
  prove that such an isoclinic conformal immersion comes from two
  holomorphic functions. A Cauchy problem is proposed and solved,
  namely: construct an isoclinic and minimal positive (negative)
  spacelike surface in $\semi$, containing a given positive (negative)
  real analytic curve. At last, we study the important and well-known
  Björling problem, illustrating with some examples given in the last
  section.
\end{abstract}

\maketitle

\section{Introduction}\label{sec:intro}

In this work we study spacelike surfaces in $\semi$ aiming to solve
well-known classical problems. Given any two planes $P^2$ and
$\overline{P}^2$ in $\mathbb{R}^4$, we consider the angle between a unit
vector $v$ in $P^2$ and its orthogonal projection $\bar{v}$ in
$\overline{P}^2$. When $v$ describes a circumference of radius $1$
centred at the origin, that angle varies, in general, between two
distinct extreme values or, equivalently, that unit circumference in
$P^2$ projects as an ellipse in $\overline{P}^2$ with axes corresponding
to the extreme values of the above angle.

In \cite{Won}, Wong developed a curvature theory for surfaces in
$\mathbb{R}^4$ based on angles between two tangent planes of the
surface. When the angle remains constant, i.e., the ellipse is also a
circumference, we say that the planes are isoclinic to each other. An
interesting connection with functions of one complex variable is the
well-known theorem establishing that a 2-dimensional surface of class
$C^2$ in $\mathbb{R}^4$ has the property that all of its tangent
2-planes are mutually isoclinic if, and only if the surface is a
$R$-surface, that is, a surface given in suitable rectangular
coordinates $(x,y,u,v)$ in $\mathbb{R}^4$ by $u=u(x,y)$, $v=v(x,y)$,
where $u(x,y)$ and $v(x,y)$ are the real and imaginary parts of a
complex analytic function $f(x+iy)$. In the higher dimensional case,
Wong shows in \cite{Wog} that the only $n$-dimensional surfaces of class
$C^2$ in $\mathbb{R}^{2n}$ $(n>2)$ whose tangent $n$-planes are all
mutually isoclinic are the $n$-planes.

In the same way, we can consider two planes positive (negative) planes
in $\semi$ and take the angle that a unit vector in one of those planes
makes with its orthogonal projection in the other plane, depending on
causal character of the plane spanned by those vectors. In
Definition\,\ref{cli} we define positive (negative) planes in $\semi$ to
be isoclinic to each other, and using the operator $L$ given by
(\ref{trans}) we provide a characterization of such planes. Then we
present a definition of isoclinic parametric surfaces in $\semi$, which
will be objects of study of our work.

In Section \ref{sec:cauchyp}, we propose and solve a Cauchy problem
which asks about the existence of an isoclinic and minimal positive
(negative) spacelike surface in $\semi$ containing a given positive
(negative) real analytic curve.

In Section \ref{sec:bjorlingp}, we deal with the well-known Bj\"orling
problem, in this case for positive (negative) spacelike surface in
$\semi$. It consists of constructing a minimal positive (negative)
spacelike surface in $\semi$ containing a given real analytic
strip. This problem was proposed in Euclidean space $\mathbb{R}^3$ by
Bj\"orling in 1844 and its solution obtained by Schwarz in 1890 through
an explicit formula in terms of initial data. Thereafter, the Bj\"orling
problem has been considered in other ambient spaces, including in larger
codimension or with indefinite metrics. Some works in the literature are
\cite{ACM, AM, AV, CDM, DFM, DM, MO}.

\section{Some algebraic and analytic preliminary facts}\label{sec:algfacts}

The $4$-dimensional pseudo-Euclidean space $\real_{2}^{4}$ is the
$4$-dimensional vector space $\real^{4}$ equipped with the
pseudo-Riemannian metric
\begin{equation}\label{eq:semimetric}
  ds^{2} = - (d x^{1})^{2} - (d x^{2})^{2} + (d x^{3})^{2} + (d x^{4})^{2},
\end{equation} 
oriented by the volume form
\[\omega = d x^{1} \wedge d x^{2} \wedge d x^{3} \wedge d x^{4}.\]

The inner product associated to the quadratic form $ds^{2}$ is given by
\begin{equation}\label{eq:semiorid}
\lpr{v}{w} = - v^{1}w^{1} - v^{2}w^{2} + v^{3}w^{3} + v^{4}w^{4}.
\end{equation}  

The elements of the canonical basis of the vector space $\real^4$ will
be denoted by $e_{i}$, $i=1,2,3,4$. The six coordinate planes
$\pi_{ij}=\Span\{e_{i},e_{j}\}$, for $i,j = 1,2,3,4$, are such that
$\pi_{12}$ is a negative Euclidean plane, this means that
$-ds^{2}(\pi_{12})=(dx^1)^2 + (dx^2)^2$ and thus it has a negative
definite induced metric; $\pi_{34}$ is a positive Euclidean plane, since
$ds^{2}(\pi_{34})=(dx^3)^2 + (dx^4)^2$; whereas the other four
coordinate planes $\pi_{13}$, $\pi_{14}$, $\pi_{23}$ and $\pi_{24}$ are
Lorentzian planes, for example $ds^{2}(\pi_{13})=-(dx^1)^2 + (dx^3)^2$.

A positive (respec. negative) spacelike plane is a $2$-dimensional
subspace $V^{2} \subset \real_{2}^{4}$ such that the induced metric
$ds^{2}(V^{2})$ is positive (respec. negative) definite. Given a
positive (respec. negative) spacelike plane $V^2$, we say that a basis
$\{v_{1},v_{2}\}$ for $V^2$ is a $\epsilon \lambda$-isothermic basis if,
and only if,
\begin{equation}
  \lpr{v_{i}}{v_{j}} = \epsilon \lambda^{2} \delta_{ij}\mbox{, for $i,j
    = 1,2$ and $\epsilon = 1$ (respec. $\epsilon =-1$)}.
\end{equation} 

The geometry of a given negative plane $V^{2}$ with an orthonormal basis
$\{v_{1},v_{2}\}$ is Euclidean, since the induced metric is
\[-ds^{2}(v_{i},v_{j}) = -\lpr{v_{i}}{v_{j}} = \delta_{ij}.\], 

If $\{v,w\}$ is a $(-1)$-orthonormal set, we say that the negative plane
$V^{2}=\Span\{v,w\}$ has positive induced orientation if, and only
if, the projection from $\semi$ onto $\pi_{12}$ defined by
\[pr_{12}(v^{1},v^{2},v^{3},v^{4}) = v^{1} e_{1} + v^{2} e_{2}\] give us
a positive basis relative to $\{e_{1},e_{2}\}$, in this
order. Analogously, we define the positive induced orientation for
positive planes with the projection $pr_{34}$. If it is given a negative
plane $V^{2} =\Span\{v_{1},v_{2}\}$ and a positive plane
$W^{2} =\Span\{w_{1},w_{2}\}$, both positively oriented, such that
the set $\mathcal{B}=\{v_{1},v_{2},w_{1},w_{2}\}$ is an orthonormal set,
then $\mathcal{B}$ is a positive orthonormal basis of $\real_2^4$.

When necessary, we will use the following notation for the projections
onto the planes $\pi_{12}$ and $\pi_{34}$:
\[pr_{12}(v) = \hat{v}\quad\mbox{and}\quad pr_{34}(v) = \tilde{v},\]
thus $v = \hat{v} + \tilde{v}$.

Two useful elements of the orthogonal group of $\semi$ are given, in
matrix form, by
\begin{equation}\label{trans}
  L = \begin{bmatrix}
    0 &-1 & 0 & 0 \\ 1 & 0 & 0 & 0 \\ 
    0 & 0 & 0 &-1 \\ 0 & 0 & 1 & 0
  \end{bmatrix}\quad
  \mbox{and}\quad
  N = \begin{bmatrix}
    0 & 0 & -1 & 0 \\ 0 & 0 & 0 & 1 \\ 
    -1 & 0 & 0 & 0 \\ 0 & 1 & 0 & 0
  \end{bmatrix}.
\end{equation}
Notice that $\det L =\det N=1$, $-L^2 = N^2 = I$ and $LN + NL = 0$,
where $I$ is the identity matrix.

If $v=(v^1,v^2,v^3,v^4)$ is a positive (respec. negative) vector of
$\semi$, then $L(v)=(-v^2,v^1,-v^4,v^3)$ is another positive
(respec. negative) vector such that $\left\lbrace v,L(v)\right\rbrace$
is a $\epsilon\vert v\vert$-isothermic basis for the positive
(respec. negative) plane spanned by these vectors. We have an analogous
statement for the transformaton $N$.

\subsection{Spheres in $\semi$}

The positive ($\epsilon = 1$) sphere of radius $r > 0$, the negative
($\epsilon = -1$) sphere of radius $r > 0$ and the null ($\epsilon = 0$)
null sphere in $\semi$ are defined by
\[S_{2}^{3}(\epsilon r) = \{v \in \semi \colon  \lpr{v}{v} = \epsilon r^{2}
  \}.\] We give a parametrization for unit spheres ($r=1$) as follows:
 
If $w = (a,b,x,y) \in S_{2}^{3}(-1)$, then
$a^{2} + b^{2} = 1 + x^{2} + y^{2}$ so we can take the parametric hypersurface
\[F(\varphi,\theta, \eta) = (\cosh \varphi \cos \theta, \cosh \varphi
  \sin \theta, \sinh \varphi \cos \eta, \sinh \varphi \sin \eta), \]
satisfying $F(\real \times [0, 2\pi]^{2}) = S_{2}^{3}(-1)$.

In the same way, if $w = (a,b,x,y) \in S_{2}^{3}(1)$, then
$a^{2} + b^{2} + 1 = x^{2} + y^{2}$ and 
\[G(\varphi,\theta, \eta) = (\sinh \varphi \cos \theta, \sinh \varphi
  \sin \theta, \cosh \varphi \cos \eta, \cosh \varphi \sin \eta),\] is a
parametrization such that
$G(\real \times [0, 2\pi]^{2}) = S_{2}^{3}(1)$.

For the null sphere $S_{2}^{3}(0)$, if $w = (a,b,x,y) \in S_{2}^{3}(0)$,
then $a^{2} + b^{2} = x^{2} + y^{2}$ and we take the parametric
hypersurface
\[H(\varphi,\theta, \eta) = (e^{\varphi} \cos \theta, e^{\varphi} \sin
  \theta, e^{\varphi} \cos \eta, e^{\varphi} \sin \eta),\] such that
$H(\real \times [0, 2\pi]^{2}) = S_{2}^{3}(0)$.

Let $pr_{12}(w) = \hat{w}$ and $pr_{34}(w) = \tilde{w}$ be,
respectively, the orthogonal projections of a vector $w \in \semi$ onto
the coordinate planes $\pi_{12}$ and $\pi_{34}$. In terms of the above
parametrizations, if $w \in S_{2}^{3}(-1)$ then
$\lpr{\hat{w}}{w} = -\cosh^2 \varphi$, and if $w \in S_{2}^{3}(1)$ then
$\lpr{\tilde{w}}{w} = \cosh^2 \varphi$. The following proposition is the
key for our definition of isoclinic planes.

\begin{prop}
  Let $\{ v,w \}$ be a $(-1)$-isothermic basis of a negative spacelike
  plane $V^{2}$ in $\semi$. The orthogonal projection of the negative
  Euclidean circumference
  $x(\theta) = \cos \theta \; v + \sin \theta \; w$ onto the plane
  $\pi_{12}$ is a negative Euclidean circumference of radius $r$ if and
  only if the hyperbolic angle function $\varphi(\theta)$ given by
  \begin{equation}
    \varphi\colon \theta \to \lpr{\hat{x}(\theta)}{x(\theta)} = - \cosh^2 \varphi(\theta)
  \end{equation}      
  is the constant $-r^2$.
\end{prop}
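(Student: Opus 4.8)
The plan is to reduce everything to a Euclidean computation in the plane $\pi_{12}$. First I would expand the projected curve: since the projection is linear, $\hat{x}(\theta) = pr_{12}(x(\theta)) = \cos\theta\,\hat{v} + \sin\theta\,\hat{w}$. Writing $x = \hat{x} + \tilde{x}$ with $\tilde{x} = pr_{34}(x) \in \pi_{34}$ and using that $\pi_{12}$ and $\pi_{34}$ are orthogonal for the metric (\ref{eq:semiorid}), so that $\lpr{\hat{x}}{\tilde{x}} = 0$, I obtain $\lpr{\hat{x}(\theta)}{x(\theta)} = \lpr{\hat{x}(\theta)}{\hat{x}(\theta)}$. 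Because the induced metric on $\pi_{12}$ is negative definite and equals minus the standard Euclidean product on the first two coordinates, the quantity $-\lpr{\hat{x}(\theta)}{\hat{x}(\theta)}$ is exactly the squared Euclidean length of $\hat{x}(\theta)$ in $\pi_{12}$. Thus the function in the statement satisfies $\cosh^2\varphi(\theta) = -\lpr{\hat{x}(\theta)}{x(\theta)} = \lVert \hat{x}(\theta)\rVert^2$, where $\lVert\cdot\rVert$ denotes that Euclidean norm.

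Next I would characterise when the image of $\theta \mapsto \hat{x}(\theta)$ is a circumference of radius $r$. This curve is the image of the unit circle under the linear map sending the canonical frame of $\pi_{12}$ to $(\hat{v},\hat{w})$, hence it is automatically an ellipse centred at the origin; it is a circumference of radius $r$ exactly when this map is conformal, i.e. when $\hat{v}$ and $\hat{w}$ are Euclidean-orthogonal with $\lVert\hat{v}\rVert = \lVert\hat{w}\rVert = r$. To make the equivalence with constancy transparent I would expand $\lVert\hat{x}(\theta)\rVert^2 = \lVert\hat{v}\rVert^2\cos^2\theta + 2(\hat{v}\cdot\hat{w})\cos\theta\sin\theta + \lVert\hat{w}\rVert^2\sin^2\theta$ and rewrite it through the double-angle formulas as
\begin{equation*}
  \lVert\hat{x}(\theta)\rVert^2 = \frac{\lVert\hat{v}\rVert^2 + \lVert\hat{w}\rVert^2}{2} + \frac{\lVert\hat{v}\rVert^2 - \lVert\hat{w}\rVert^2}{2}\cos 2\theta + (\hat{v}\cdot\hat{w})\sin 2\theta,
\end{equation*}
where $\hat{v}\cdot\hat{w}$ is the Euclidean product in $\pi_{12}$. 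Since $1,\cos 2\theta,\sin 2\theta$ are linearly independent as functions of $\theta$, this is constant if and only if $\lVert\hat{v}\rVert = \lVert\hat{w}\rVert$ and $\hat{v}\cdot\hat{w} = 0$, which are exactly the two conditions above; the constant value is then $\lVert\hat{v}\rVert^2 = r^2$.

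Finally I would assemble the chain of equivalences: the projection $\hat{x}(\theta)$ is a negative Euclidean circumference of radius $r$ $\iff$ $\lVert\hat{x}(\theta)\rVert^2$ is the constant $r^2$ $\iff$ $\lpr{\hat{x}(\theta)}{x(\theta)} = -\lVert\hat{x}(\theta)\rVert^2$ is the constant $-r^2$ $\iff$ $\cosh^2\varphi(\theta) \equiv r^2$, i.e.\ $\varphi$ is constant. The computation is elementary, so the only real care is bookkeeping: keeping the sign of the negative-definite metric on $\pi_{12}$ straight when passing between $\lpr{\cdot}{\cdot}$ and the genuine Euclidean norm, and justifying the ``constant squared-norm $\Leftrightarrow$ circle'' step by the linear independence of $1,\cos 2\theta,\sin 2\theta$ rather than by an appeal to pictures. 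The main (mild) obstacle is exactly this last double-angle argument, which upgrades ``lies on a circle of radius $r$'' to ``is a circumference of radius $r$'' and simultaneously pins the radius to $r = \cosh\varphi$.
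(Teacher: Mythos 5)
Your proof is correct and takes essentially the same route as the paper's: your $\lVert\hat v\rVert^2$, $\hat v\cdot\hat w$, $\lVert\hat w\rVert^2$ are exactly the paper's $E$, $F$, $G$, and both arguments reduce to observing that $E\cos^2\theta + 2F\cos\theta\sin\theta + G\sin^2\theta$ is constant (equivalently, the projected ellipse is a circle) if and only if $E=G=r^2$ and $F=0$. You merely spell out two steps the paper leaves implicit, namely the reduction $\lpr{\hat x(\theta)}{x(\theta)} = \lpr{\hat x(\theta)}{\hat x(\theta)}$ and the double-angle linear-independence justification of the constancy criterion.
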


\begin{proof}
  Let $E$, $F$ and $G$ defined by $E = -\lpr{\hat{v}}{\hat{v}}$,
  $F = -\lpr{\hat{v}}{\hat{w}}$ and $G = -\lpr{\hat{w}}{\hat{w}}$. Then
  $\cosh^2 \varphi(\theta) = E \cos^{2} \theta + 2F \cos \theta \sin
  \theta + G \sin^{2} \theta$ and $\pi_{12}(x(\theta))$ is a
  circumference if, and only if $E = G$ and $F = 0$, leading to
  $E= \cosh^2 \varphi(\theta) = r^2$. Moreover, $\varphi \neq 0$ implies
  $V^2\cap \pi_{12}=\left\lbrace 0\right\rbrace$ and $\varphi=0$ implies
  $V^2=\pi_{12}$.
\end{proof}

Note that we have an analogous result if we consider positive spacelike
planes in $\semi$ and the orthogonal projection onto the plane
$\pi_{34}$. In this case, we deal with the Lorentzian spacelike angle
between two positive vectors than span a timelike plane (see
\cite{Rat}).

\begin{dfn}\label{cli}
  Let $V^2$ be a negative spacelike plane in $\semi$. Consider a
  $(-1)$-isothermic basis of $V^2$. We say that the negative planes
  $V^{2}$ and $\pi_{12}$ are isoclinic to each other if, and only if the
  hyperbolic angle $\varphi$ between $V^2$ and $\pi_{12}$ is
  constant. In a similar way we define isoclinicness between a positive
  spacelike plane $W^{2}$ and $\pi_{34}$.

  A parametric surface $f\colon M\to \semi$ is an \emph{isoclinic surface} if
  all of its tangent planes are positive (respec. negative) spacelike
  planes and isoclinic to $\pi_{34}$ (respec. $\pi_{12}$).
\end{dfn}

The following proposition characterizes the isoclinic planes using the
transformation $L$ given in (\ref{trans}).

\begin{prop}\label{apl}
  Let $L \colon  \semi \to \semi$ be the linear transformation given in
  (\ref{trans}). Then:
  \begin{enumerate}
    \item $x \in S_{2}^{3}(-1)$ if, and only if $L(x) \in S_{2}^{3}(-1)$
    and, $x \in S_{2}^{3}(1)$ if, and only if $L(x) \in S_{2}^{3}(1)$.
    \item If $x \in S_{2}^{3}(-1)$, then $\Span\{x,L(x)\}$ is a
    negative plane isoclinic to $\pi_{12}$.
    \item If $x \in S_{2}^{3}(1)$, then $\Span\{x,L(x)\}$ is a
    positive plane isoclinic to $\pi_{34}$.
  \end{enumerate}
  Reciprocally, if the negative planes $V^{2} =\Span\{x,y\}$ and
  $\pi_{12}$ in $\semi$ are isoclinic to each other, then $y =
  L(x)$. Analogously, if the positive planes $W^2=\Span\{x,y\}$ and
  $\pi_{34}$ in $\semi$ are isoclinic to each other, then $y=L(x)$.
\end{prop}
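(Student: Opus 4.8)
The plan is to establish items (1)--(3) directly and then attack the converse, using throughout the criterion ``$E=G$ and $F=0$'' supplied by the Proposition that precedes Definition~\ref{cli}.

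For item (1) I would note that $L$ is a linear isometry of $\semi$: squaring the coordinates of $L(x)=(-x^2,x^1,-x^4,x^3)$ against the signature $(-,-,+,+)$ reproduces $\lpr{x}{x}$, so $\lpr{L(x)}{L(x)}=\lpr{x}{x}$ for every $x$. Hence $\lpr{x}{x}=-1$ iff $\lpr{L(x)}{L(x)}=-1$ and $\lpr{x}{x}=1$ iff $\lpr{L(x)}{L(x)}=1$, which is exactly item (1). For item (2), starting from $x\in S_{2}^{3}(-1)$ I would first recall the fact already recorded before this proposition that $\{x,L(x)\}$ is a $(-1)$-isothermic basis of the negative plane $V^{2}=\Span\{x,L(x)\}$, so it remains only to test isoclinicity to $\pi_{12}$. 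With $\hat{x}=(x^1,x^2,0,0)$ and $\widehat{L(x)}=(-x^2,x^1,0,0)$, a short computation of the coefficients $E=-\lpr{\hat{x}}{\hat{x}}$, $F=-\lpr{\hat{x}}{\widehat{L(x)}}$, $G=-\lpr{\widehat{L(x)}}{\widehat{L(x)}}$ yields $E=G=(x^1)^2+(x^2)^2$ and $F=0$, so by that Proposition the projected circle has constant radius and $V^2$ is isoclinic to $\pi_{12}$. Item (3) is the mirror computation for $x\in S_{2}^{3}(1)$, replacing $pr_{12}$ by $pr_{34}$ and invoking the positive-plane version of the Proposition remarked upon immediately after it.

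The converse is the substantial part. Taking a $(-1)$-isothermic basis $\{x,y\}$ of a negative plane $V^2$ isoclinic to $\pi_{12}$, the isoclinic hypothesis is precisely $E=G$ and $F=0$, that is $(x^1)^2+(x^2)^2=(y^1)^2+(y^2)^2$ and $x^1y^1+x^2y^2=0$. Substituting $F=0$ into the orthogonality relation $\lpr{x}{y}=0$ forces $x^3y^3+x^4y^4=0$, and the two normalizations $\lpr{x}{x}=\lpr{y}{y}=-1$ then give $(x^3)^2+(x^4)^2=(y^3)^2+(y^4)^2$. Consequently, in each coordinate plane the projected pairs $(\hat{x},\hat{y})$ and $(\tilde{x},\tilde{y})$ are Euclidean-orthogonal and of equal length, so that $\hat{y}=\pm(-x^2,x^1)$ and $\tilde{y}=\pm(-x^4,x^3)$; in other words $y$ agrees with $L(x)=(-x^2,x^1,-x^4,x^3)$ up to a sign that is, a priori, chosen independently in $\pi_{12}$ and in $\pi_{34}$.

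The hard part is exactly this last sign bookkeeping. The purely metric isoclinic condition is symmetric and blind to chirality: the opposite-chirality vector $(-x^2,x^1,x^4,-x^3)$ spans a negative plane whose $\pi_{12}$-projection is the \emph{same} circle, hence is also isoclinic to $\pi_{12}$, and even the positive induced orientation of $V^2$ defined through $pr_{12}$ does not separate it from $L(x)$. I therefore anticipate that the orientation conventions introduced earlier in this section must be brought in more carefully --- tying together the induced orientations of the $\pi_{12}$- and $\pi_{34}$-projections --- so as to exclude the mixed-chirality solutions and pin $y$ down to $L(x)$; this chirality elimination is where I expect the main difficulty to lie. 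Granting it, the positive statement for $W^2$ and $\pi_{34}$ follows by the identical argument with the two coordinate planes interchanged.
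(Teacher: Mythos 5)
Your handling of items (1)--(3) and the first half of your converse is exactly the paper's route: the paper in fact proves nothing for (1)--(3) (its proof opens with ``we only need to show the reciprocal,'' these items being immediate from the remark preceding the proposition that $\{v,L(v)\}$ is an $\epsilon\vert v\vert$-isothermic set), and for the reciprocal it reads off $y^{1}=-x^{2}$, $y^{2}=x^{1}$ from the fact that $\{\hat{x},\hat{y}\}$ is a $(-\cosh\varphi)$-isothermic basis of $\pi_{12}$ --- precisely your computation of $E=G$, $F=0$ and its consequences.

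The sign problem you flag and leave unresolved (``Granting it\dots'') is therefore the whole issue, so as it stands your proposal is not a complete proof. But you should know the paper does not close this gap either: its entire treatment is the sentence ``To preserve orientation and $(-1)$-isothermality, we must have $y^{3}=-x^{4}$ and $y^{4}=x^{3}$,'' which is an assertion, not an argument, and your analysis shows why no argument can exist under the stated hypotheses. Concretely, take $x=(\cosh\varphi,0,\sinh\varphi,0)$ and $y'=(-x^{2},x^{1},x^{4},-x^{3})=(0,\cosh\varphi,0,-\sinh\varphi)$ with $\varphi\neq 0$. Then $\{x,y'\}$ is a $(-1)$-orthonormal pair with $E=G=\cosh^{2}\varphi$ and $F=0$, so $V^{2}=\Span\{x,y'\}$ is a negative plane isoclinic to $\pi_{12}$ in the sense of Definition~\ref{cli}; it carries the same $pr_{12}$-induced orientation as $\Span\{x,L(x)\}$ because $\widehat{y'}=\widehat{L(x)}$; its orthogonal complement is isoclinic to $\pi_{34}$ and can be ordered so that the four vectors form a positive orthonormal basis of $\semi$; and yet $y'\neq L(x)$ --- indeed $L(x)\notin\Span\{x,y'\}$, so the plane is not of the form $\Span\{z,L(z)\}$ for any $z$. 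Hence the reciprocal as printed is false: isoclinic planes split into two chirality families (left- versus right-isoclinic, in Wong's terminology), only one of which is swept out by $L$, and some additional hypothesis --- $L$-invariance of $V^{2}$, or a chirality choice built into the definition of isoclinicness --- is required to single it out. Your instinct that the orientation conventions ``must be brought in more carefully'' is right, but no convention actually stated in the paper suffices; your diagnosis of the obstruction is sharper than the paper's own proof.
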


\begin{proof}
  We only need to show the reciprocal. For this, if
  $\left\lbrace \hat{x},\hat{y}\right\rbrace$ is a
  $(-\cosh \varphi)$-isothermic basis for $\pi_{12}$, where
  $\hat{x}= x^{1}e_{1} + x^{2}e_{2}$ and
  $\hat{y}= y^{1}e_{1} + y^{2}e_{2}$, then $y^{1} = -x^{2}$ and
  $y^{2} = x^{1}$. To preserve orientation and $(-1)$-isothermality, we
  must have $y^{3} = -x^{4}$ and $y^{4} = x^{3}$.
\end{proof}

Now, using the transformation $N$, also given in (\ref{trans}), it
follow the:

\begin{corol}
  Let $V^2=\Span\{v_1,v_2\}$ be a negative spacelike plane in $\semi$
  isoclinic to $\pi_{12}$. Then, the orthogonal complement of $V^2$ in
  $\semi$ is the positive spacelike plane $\Span\{N(v_1),N(v_2)\}$. It
  is isoclinic to $\pi_{34}$, and the Lorentzian spacelike angle between
  them equal to the hyperbolic angle between $V^2$ and $\pi_{12}$.
\end{corol}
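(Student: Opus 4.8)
The plan is to push everything through the single transformation $N$, using two features of it: it reverses the inner product and it exchanges the two coordinate planes. First I would invoke the reciprocal part of Proposition~\ref{apl}: since $V^2$ is isoclinic to $\pi_{12}$, I may take a basis of $V^2$ of the form $\{v, L(v)\}$, and after scaling $v$ (a negative vector) I may assume $v \in S_{2}^{3}(-1)$; because the conclusion only concerns the span $N(V^2) = \Span\{N(v_1), N(v_2)\}$, which is basis-independent, this normalization is harmless. A direct computation from (\ref{trans}) then records the facts I will lean on: $N^2 = I$ (so $N$ is invertible), $NL = -LN$, and $\lpr{N(a)}{N(b)} = -\lpr{a}{b}$ for all $a,b$. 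This last identity is the crux — it shows $N$ carries $S_{2}^{3}(-1)$ into $S_{2}^{3}(1)$ and, together with $N(\pi_{12}) = \pi_{34}$, that $N$ intertwines the projections, $pr_{34}\circ N = N \circ pr_{12}$, i.e. $\widetilde{N(a)} = N(\hat a)$.

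Next I would establish $\Span\{N(v_1), N(v_2)\} = (V^2)^\perp$. The bilinear form $(a,b)\mapsto \lpr{N(a)}{b}$ is alternating: from $N^2 = I$ and the metric-reversing property, $\lpr{N(a)}{b} = -\lpr{a}{N(b)} = -\lpr{N(b)}{a}$. Hence $\lpr{N(a)}{a} = 0$ automatically, and the full orthogonality $\lpr{N(v_i)}{v_j} = 0$ for all $i,j$ collapses to the single identity $\lpr{N(v)}{L(v)} = 0$, a one-line check from (\ref{trans}). Since $N$ is invertible it preserves linear independence, so $\Span\{N(v_1), N(v_2)\}$ is a genuine $2$-plane; as $V^2$ is spacelike, hence non-degenerate, its orthogonal complement is exactly $2$-dimensional and the inclusion becomes an equality.

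The positivity and isoclinicity then follow quickly. Because $N$ reverses the metric, every nonzero vector of $N(V^2)$ is positive, so $W^2 := \Span\{N(v_1), N(v_2)\}$ is a positive spacelike plane. Writing $N(v_2) = N(L(v)) = -L(N(v))$ via $NL = -LN$, I get $W^2 = \Span\{N(v), L(N(v))\}$ with $N(v) \in S_{2}^{3}(1)$, so Proposition~\ref{apl}(3) yields that $W^2$ is isoclinic to $\pi_{34}$.

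The delicate step, and the real content, is the equality of the two angles, because it forces me to match two different conventions: the hyperbolic angle $\varphi$ of a negative plane, read off from $\lpr{\hat x}{x} = -\cosh^2\varphi$ on $S_{2}^{3}(-1)$, against the Lorentzian spacelike angle $\psi$ of a positive plane, read off from $\lpr{\tilde u}{u} = \cosh^2\psi$ on $S_{2}^{3}(1)$. I would test both on the single vector $u = N(v) \in W^2 \cap S_{2}^{3}(1)$. Using the intertwining relation and the metric-reversing property, $\lpr{\tilde u}{u} = \lpr{N(\hat v)}{N(v)} = -\lpr{\hat v}{v} = \cosh^2\varphi$, so $\cosh^2\psi = \cosh^2\varphi$ and hence $\psi = \varphi$. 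The only point to watch is that the two angle functions really are captured by these projection formulas — this is exactly what the sphere parametrizations $F$ and $G$ together with the preceding Proposition provide — after which the coincidence is precisely the observation that $N$ swaps $\pi_{12}\leftrightarrow\pi_{34}$ while negating the inner product.
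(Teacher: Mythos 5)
Your proof is correct and is essentially the argument the paper intends: the corollary appears there without any written proof, as an immediate consequence of Proposition~\ref{apl} and the properties of $N$, which is precisely what you carry out in detail. Your key identity $\lpr{N(a)}{N(b)} = -\lpr{a}{b}$ (so that $N$ reverses the metric, swaps $S_{2}^{3}(-1)$ with $S_{2}^{3}(1)$ and $\pi_{12}$ with $\pi_{34}$, and intertwines the projections), together with the anticommutation $NL=-LN$, the single orthogonality check $\lpr{N(v)}{L(v)}=0$, and the dimension count for the complement, correctly fills in everything the paper leaves implicit.
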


Our first example of an isoclinic surface in $\semi$ is the following:

\begin{example}\label{exa}
  Let $f \colon  \real^{2} \longrightarrow \semi$ be the parametric surface
  given by \[f(u,v) = (u,v,(u^{2} - v^{2})/2,uv).\]

  If $U = \{w = u + iv \in \complex \colon  \vert w \vert < 1\}$ and
  $\Omega=\complex \setminus \overline{U}$, then: 

  \begin{enumerate}
    \item $S_{-}=f(U)$ and is a $2$-dimensional negative submanifold of
    $\semi$, with induced metric $-ds^{2} = (1 - u^{2} - v^{2}) du dv$.
    \item $S_{+}=f(\Omega)$ is a $2$-dimensional positive submanifold of
    $\semi$, with induced metric $ds^{2} = (1 -u^{2} - v^{2}) du dv$.
  \end{enumerate}

  Both surfaces are non-flat (non-vanishing Gauss curvature) and
  isoclinic ones.
\end{example}

In fact, for $w\in U\cup\Omega$, note that
\[E = \lpr{f_{u}}{f_{u}} = -1 + u^{2} + v^{2}=\lpr{f_{v}}{f_{v}} =
  G\quad\mbox{and}\quad F=\lpr{f_{u}}{f_{v}}= 0.\] Consider the
$(\pm1)$-isothermic negative basis $\left\lbrace a,b\right\rbrace$ of
$T_{f(w)}S_{\pm}$, where $a =\frac{1}{\sqrt{\pm E(w)}}f_{u}(w)$ and
$b = \frac{1}{\sqrt{\pm G(w)}}f_{v}(w)$. The hyperbolic angle $\varphi(w)$
satifies
\[\cosh^2 \varphi(w) = \lpr{\cos \theta \; \hat{a} + \sin \theta \;
    \hat{b}}{\cos \theta \; a + \sin \theta \; b} = \frac{1}{1 - u^{2} -
    v^{2}},\] and we note that $\varphi(w) \rightarrow \infty$ when
$\vert w \vert \rightarrow 1$.

On the other hand, we have that a basis for the normal bundle with
positive induced orientation is given by
\[N_{1}(w) = \frac{1}{\sqrt{\pm E(w)}}(u,-v,1,0)\quad\mbox{and}\quad
  N_{2}(w) = \frac{1}{\sqrt{\pm G(w)}}(v,u,0,1),\] showing that the
normal plane at each point is an positive plane isoclinic to $\pi_{34}$.

The second quadratic form of $S_{\pm}$ are given by
$B_{ij} = B_{ij}^1N_{1} + B_{ij}^2N_{2}$ where
\[B_{ij}^{1} = \lpr{D_{ij}f}{N_{1}} = 
  \frac{1}{\sqrt{\pm E}}
  \begin{bmatrix} 1 & 0 \\ 0 & -1 \end{bmatrix}
  \quad\mbox{and}\quad 
  B_{ij}^{2} = \lpr{D_{ij}f}{N_{2}} = 
  \frac{1}{\sqrt{\pm G}}
  \begin{bmatrix} 0 & 1 \\ 1 & 0 \end{bmatrix}.\]
Therefore, the Gauss curvatures are: 
\[K_{S_{-}}(w) = K_{S_{+}}(w) = \frac{\det B_{ij}^{1} + \det
    B_{ij}^{2}}{\det g_{ij}} = -\frac{2}{(1 - u^{2} - v^{2})^{3}}.\]

\subsection{On $\complex P^{3}$}
Let us extend the symmetric bilinear form $\langle ~,~\rangle$ of index
2 in $\real_{2}^{4}$ to the following symmetric bilinear form in the
$4$-dimensional complex vector space
$\complex^{4}\equiv \mathbb{R}^4\oplus i\mathbb{R}^4$:
\begin{equation}
  \ll x_1 + i y_1,x_2 + i y_2 \gg \, = \left(\lpr{x_1}{x_2} -
    \lpr{y_1}{y_2}\right) + i\left(\lpr{x_1}{y_2} +
    \lpr{y_1}{x_2}\right).
\end{equation}

As usual, we denote complex projective space of complex dimension 3 by
$\mathbb{C}P^3$, which corresponds to the space of all the complex lines
through the origin of $\mathbb{C}^4$.

Now, given any positive plane $V^2=\Span\{v,w\}\subset\semi$, where
$\{v,w\}$ is a $\lambda$-isothermic basis, and any complex number
$\mu \neq 0$, we have that
$\lbrace\mathfrak{Re}(\mu \,z),\mathfrak{Im}(\mu\,z)\rbrace$, with
$z = v + i w$, is a $\vert \mu \vert \lambda$-isothermic basis for the
plane $V^2$. Therefore, we can identify the plane $V^2$ with the point
$\left[ v + i w\right]\in\mathbb{C}P^{3}$, and define the Grassmannian
of the positive planes as a quadric in $\mathbb{C}P^{3}$ (See \cite{HO}
for details). Then we define the Grassmannian of the positive planes in
$\mathbb{R}^4_2$ as the complex subquadric of $\mathbb{C}P^3$:
\begin{equation}
  Q_{pos}^{2} = \left\{ \left[ z\right] \in \mathbb{C}P^{3} \colon  \ll
    z,z \gg = 0 \,\,\mbox{and}\,\, \ll z,\bar{z} \gg > 0 \right\}.
\end{equation}
In same way, the Grassmannian of the negative planes in $\mathbb{R}^4_2$
is the complex subquadric of $\mathbb{C}P^3$:
\begin{equation}
Q_{neg}^2 = \left\lbrace \left[ z\right] \in \mathbb{C}P^{3} \colon  \ll z,z
  \gg = 0 \,\,\mbox{and}\,\, \ll z,\bar{z} \gg < 0 \right\rbrace.
\end{equation}
\begin{remark}
  In $\semi$ there is pair of null vectors $x$ and $y$ such that
  $\langle x,y\rangle=0$, but they are linearly independent. More
  generally, the metrics index is the maximum dimension of a null
  subspace. For example, $x=(1,0,1,0)$ and $y=(0,1,0,1)$. In this case,
  writing $z=x+iy$ we have
  \[\ll z,z \gg = \ll z,\bar{z} \gg = 0.\]
\end{remark}

This remark shows that
\[Q^2_{null} =\left\lbrace [z]\in \mathbb{C}P^3 \colon \ll z,z\gg \,= \,
    \ll z,\bar{z} \gg \, = 0 \right\rbrace\] is not empty.
Therefore,
\[Q^2 = \left\lbrace [z]\in \mathbb{C}P^3\colon \, \ll z,z \gg\, =
    0\right\rbrace\] is the disjoint union of the subquadrics
$Q^2_{pos}$, $Q^2_{neg}$ and $Q^2_{null}$.





\begin{theor}
  Let $\overline{\complex} = \complex \cup \{\infty\}$ be the Riemann
  sphere. The map
  \begin{equation}
    \Phi([z]) = \left(\frac{z^{1} + i z^{2}}{z^{3} - i z^{4}},
      \frac{z^{1} - i z^{2}}{z^{3} - i z^{4}} \right) 
  \end{equation}
  is a homeomorphism from $Q^{2}$ onto
  $\overline{\complex} \times \overline{\complex}.$
\end{theor}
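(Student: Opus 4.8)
The plan is to reduce the statement to the classical fact that the Segre quadric in $\complex P^{3}$ is homeomorphic to $\complex P^{1}\times\complex P^{1}$. First I would diagonalize the complex bilinear form by the linear change of coordinates
\[
\alpha = z^{1}+iz^{2},\quad \beta = z^{1}-iz^{2},\quad \gamma = z^{3}-iz^{4},\quad \delta = z^{3}+iz^{4},
\]
which is an isomorphism of $\complex^{4}$ and hence induces a homeomorphism of $\complex P^{3}$. Since $\ll z,z\gg\,=-(z^{1})^{2}-(z^{2})^{2}+(z^{3})^{2}+(z^{4})^{2}$, while $\alpha\beta=(z^{1})^{2}+(z^{2})^{2}$ and $\gamma\delta=(z^{3})^{2}+(z^{4})^{2}$, the condition $\ll z,z\gg\,=0$ becomes $\alpha\beta=\gamma\delta$. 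Thus the change of coordinates carries $Q^{2}$ onto the Segre quadric $\{[\alpha:\beta:\gamma:\delta]\colon \alpha\beta=\gamma\delta\}$, and in the new coordinates the map $\Phi$ is simply $[\alpha:\beta:\gamma:\delta]\mapsto(\alpha/\gamma,\,\beta/\gamma)$, which makes manifest both its scale invariance (hence well-definedness on projective classes) and its rational nature.

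Next I would introduce the Segre parametrization $\sigma\colon\complex P^{1}\times\complex P^{1}\to\complex P^{3}$, $\sigma([s_{0}:s_{1}],[t_{0}:t_{1}])=[s_{0}t_{0}:s_{1}t_{1}:s_{0}t_{1}:s_{1}t_{0}]$, whose image obviously satisfies $\alpha\beta=s_{0}s_{1}t_{0}t_{1}=\gamma\delta$ and which is a bijective continuous map between compact Hausdorff spaces, hence a homeomorphism onto the quadric. Composing, a direct computation gives
\[
\Phi\bigl(\sigma([s_{0}:s_{1}],[t_{0}:t_{1}])\bigr)=\left(\frac{t_{0}}{t_{1}},\,\frac{s_{1}}{s_{0}}\right),
\]
so $\Phi\circ\sigma$ is the product of the two standard homeomorphisms $\complex P^{1}\to\overline{\complex}$, namely $[t_{0}:t_{1}]\mapsto t_{0}/t_{1}$ and $[s_{0}:s_{1}]\mapsto s_{1}/s_{0}$, precomposed with the swap of the two factors. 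Being a composition of homeomorphisms, the restriction of $\Phi$ to $Q^{2}$ is then a homeomorphism onto $\overline{\complex}\times\overline{\complex}$, which is exactly the claim. As an explicit realization of the inverse one checks that, for finite $(p,q)$, the point $\Psi(p,q)=[\,p+q:-i(p-q):1+pq:i(1-pq)\,]$ lies on $Q^{2}$ and satisfies $\Phi\circ\Psi=\mathrm{id}$.

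The main obstacle is not the generic bijectivity but the behaviour of $\Phi$ on the locus where the denominator $z^{3}-iz^{4}=\gamma$ vanishes: there $\alpha\beta=\gamma\delta=0$, so one of the numerators also vanishes and a component of $\Phi$ takes the indeterminate form $0/0$. The virtue of routing the argument through $\sigma$ is precisely that it resolves these indeterminacies uniformly: the points with $\gamma=0$ correspond to $s_{0}=0$ or $t_{1}=0$, and under $\Phi\circ\sigma$ they are sent to pairs having $\infty$ in one or both factors, with no ambiguity. If one instead wished to argue directly from the displayed formula, one would note that on the quadric $\alpha/\gamma=\delta/\beta$ and $\beta/\gamma=\delta/\alpha$, use these as alternative expressions valid exactly where the original ones fail, and then cover $Q^{2}$ by the affine Segre charts $\{\gamma\neq0\}$, $\{\beta\neq0\}$, $\{\alpha\neq0\}$, $\{\delta\neq0\}$, verifying continuity of $\Phi$ and of $\Psi$ on each chart; the case-by-case bookkeeping this requires is exactly what the Segre factorization lets us bypass.
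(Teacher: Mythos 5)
Your proof is correct, and it takes a genuinely different route from the paper's, although both begin with the same algebraic step. The paper also factors the quadric equation as $(z^{1}+iz^{2})(z^{1}-iz^{2})=(z^{3}+iz^{4})(z^{3}-iz^{4})$, but it never invokes the Segre embedding: on the locus $z^{3}-iz^{4}\neq 0$ it solves for $z$ explicitly, obtaining $z=\mu\,(x+y,-i(x-y),1+xy,i(1-xy))$ with $\mu=(z^{3}-iz^{4})/2$ (your $\Psi(p,q)$, i.e.\ the paper's $W(x,y)$), then disposes of the degenerate loci $z^{3}=\pm iz^{4}\neq 0$ and $z^{3}=z^{4}=0$ by separate case analysis, and handles the points at infinity of $\overline{\complex}\times\overline{\complex}$ through the identity $[z(x,y)]=[xy\,z(1/x,1/y)]$ and explicit limits of $[z(x,y)]$ as $x$ and/or $y\to\infty$. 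What your factorization through $\sigma$ buys is exactly what the paper leaves implicit: since $\sigma$ is a continuous bijection from a compact space onto a Hausdorff one, bicontinuity of the correspondence --- including at the indeterminacy locus $z^{3}-iz^{4}=0$, where the displayed formula reads $0/0$ --- follows uniformly and without cases, whereas the paper's limit claim only shows that its inverse parametrization extends continuously, and it never explicitly verifies that the extended $\Phi$ itself is continuous or that the case-by-case assignments fit together bijectively. What the paper's hands-on construction buys is the explicit coordinate data $(\mu,x,y)$ and the formula $W(x,y)$: these are not byproducts but the working tools of the rest of the article (the Weierstrass representation of Proposition~\ref{inte}, the operator $\tilde{J}$, and the identification of $Q^{2}_{null}$ with the locus $1-x\overline{y}=0$, which the paper appends to this very proof), while in your write-up they surface only in the closing remark about $\Psi$.
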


\begin{proof}
  We will provide a homogeneous coordinate system on $Q^2$. To this end,
  we see that $\left[ z\right]\in Q^2$ if and only if $-(z^{1})^{2} -
  (z^{2})^{2} + (z^{3})^{2} + (z^{4})^{2} = 0$,
  that is  
  \[(z^{1} + i z^{2})(z^{1} - i z^{2}) = (z^{3} + i z^{4})(z^{3} - i
    z^{4}).\] Suppose, for a moment, that $z^{3} - i z^{4} \neq 0$. Then
  we obtain
  \[\frac{z^{1} + i z^{2}}{z^{3} - i z^{4}} \; \frac{z^{1} - i
      z^{2}}{z^{3} - i z^{4}} = \frac{z^{3} + i z^{4}}{z^{3} - i
      z^{4}}.\] Defining the complex numbers
  $x = \frac{z^{1} + i z^{2}}{z^{3} - i z^{4}}$ and
  $y = \frac{z^{1} - i z^{2}}{z^{3} - i z^{4}}$, we get
  \begin{equation}\label{eq}
    z = \mu(x + y, -i(x - y), 1 + xy, i(1 - xy)),\mbox{ with } \;
    \; \mu = \frac{z^{3} - i z^{4}}{2}.
  \end{equation}
  If $z^{3} = \pm i z^{4} \neq 0$, then $z^{1} = \pm i z^{2}$. Hence, we
  can write
  \[z = \eta(x,\pm ix,1,\pm i), \mbox{ for $x = \frac{z^{1}}{z^{3}}$
      \mbox{ and } $\eta = z^{3}$.}\]
  
  Finally, if $z^{3} = i z^{4} = 0$, then $z = z^{1}(1, \pm i,0,0)$ is a
  complex representation of the plane $\pi_{12}\in Q_{neg}^{2}$.
  
  On the other hand, since $[z(x,y)] = [xy\,z(1/x,1/y)]$ it follows the:
  
  \begin{claim}
    If $[z(x,y)] = [x + y, i(x - y), 1 + xy, i(1 - xy)]$, then
    \begin{align*}
      \lim_{y \rightarrow \infty} [z(x,y)] &= [1,-i,x,-ix],
      \quad
        \lim_{x \rightarrow \infty} [z(x,y)] =
        [1,i,y,-iy]\\
      \mbox{and}
      & \lim_{(x,y) \rightarrow
        (\infty,\infty)}[z(x,y)] = [0,0,1,-i].
    \end{align*}
  \end{claim}
  
  Finally, we note that
  $\ll z,\overline{z}\gg \,= 2\mu \overline{\mu}(1 - x\overline{y})(1 -
  \overline{x}y) = 0$ if, and only if $[z] \in Q_{null}^{2}$, that
  corresponds to $ds^{2}|_{[z]} = 0$, the induced metric of these null
  planes.
\end{proof}


\begin{dfn}Let $M\subseteq \mathbb{C}$ be an open and connected set and
  $f\colon M\to \semi$ be a conformal immersion.  We say that $f$ is a
  positive (respec. negative) spacelike immersion if it is a parametric
  surface such that
  \[f_{w} = \frac{\partial f}{\partial w} =
    \frac{1}{2}\left(\frac{\partial f}{\partial u} - i \frac{\partial
        f}{\partial v}\right)\] satisfies $[f_{w}] \in Q_{pos}^{2}$
  (respec. $Q_{neg}^{2}$), where $w=u+iv$ is a conformal parameter for
  $M$.
\end{dfn}

\begin{remark}
  When the complex $1$-form $\beta = f_{w} dw$ has no real periods or,
  if $M$ is a simply connected open subset of $\complex$, the
  integral
  \[2\mathfrak{Re} \int_{w_{0}}^{w} \beta = \int_{w_{0}}^{w}
    2\mathfrak{Re}(\beta) = \int_{(u_{0},v_{0})}^{(u,v)} f_{u} du +
    f_{v} dv.\] is path independent. On the other hand, each real valued
  exact $1$-form $d \phi = \phi_{u} du + \phi_{v} dv$ can be written, in
  complex variables, as
  $d \phi = \phi_{w} dw + \phi_{\overline{w}} d \overline{w} = 2
  \mathfrak{Re} (\phi_{w} d w)$.
\end{remark}

As a consequence we have the following integral representation for
conformal immersions:

\begin{prop}[Weierstrass integral formula]\label{inte}
  Let $f\colon M\to \semi$ be a positive (respec. negative) spacelike
  conformal immersion. For any $w\in M$, we have that
  \begin{equation}\label{Wei}
    f(w) = f(w_{0}) + 2 \mathfrak{Re} \int_{w_{0}}^{w} \mu(\xi) W(x(\xi),y(\xi)) d \xi,
  \end{equation} 
  for some $[W(x(w),y(w))] \in Q_{pos}^{2}$
  $\left( [W(x(w),y(w))] \in Q_{neg}^{2}\right)$. The reciprocal is also
  true.
\end{prop}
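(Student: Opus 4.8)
The plan is to read the representation directly off the reality of $f$ together with the quadric structure of $Q_{pos}^2$, and to get the converse by differentiating the integral. First I would record the identity that drives (\ref{Wei}). Since $f$ maps into $\semi$ it is real-valued, so $f_u=2\mathfrak{Re}(f_w)$ and $f_v=-2\mathfrak{Im}(f_w)$, whence
\[
df=f_u\,du+f_v\,dv=2\mathfrak{Re}(f_w\,dw)=2\mathfrak{Re}(\beta).
\]
Thus $2\mathfrak{Re}(\beta)$ is an exact, in particular closed, $\semi$-valued $1$-form, and the fundamental theorem of calculus gives along any path in $M$ from $w_0$ to $w$
\[
2\mathfrak{Re}\int_{w_0}^{w}\beta=\int_{w_0}^{w}df=f(w)-f(w_0),
\]
the value being path-independent precisely because the integrand is exact. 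This already yields (\ref{Wei}) once $f_w$ is put in the stated form, and, crucially, it requires no holomorphicity of $f_w$.

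Next I would put $f_w$ into Weierstrass form using the quadric. By the definition of a positive spacelike immersion, $[f_w(w)]\in Q_{pos}^2$ for every $w\in M$; here $\ll f_w,f_w\gg=0$ is exactly the conformality condition $\lpr{f_u}{f_u}=\lpr{f_v}{f_v}$, $\lpr{f_u}{f_v}=0$ rewritten through the extended form, while $\ll f_w,\overline{f_w}\gg>0$ records positivity. On the open set where $f_w^{3}-if_w^{4}\neq0$, formula (\ref{eq}) applied pointwise to $z=f_w(w)$ produces
\[
f_w(w)=\mu(w)\,W(x(w),y(w)),\qquad \mu(w)=\frac{f_w^{3}-if_w^{4}}{2},
\]
with $W$ the representative of (\ref{eq}) and $(x(w),y(w))=\Phi([f_w(w)])$. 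On the exceptional locus $f_w^{3}=\pm i f_w^{4}$ I would use the remaining charts of $\Phi$, namely the limits recorded in the Claim inside the proof of the homeomorphism theorem, to extend $(x,y)$ continuously into $\overline{\complex}\times\overline{\complex}$; since $\Phi$ is a global homeomorphism the projective class $[W(x(w),y(w))]\in Q_{pos}^2$ is well defined throughout $M$, which is all the statement asserts.

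For the converse, given a map $w\mapsto[W(x(w),y(w))]\in Q_{pos}^2$ and a scalar factor $\mu$, I would \emph{define} $f$ by the right-hand side of (\ref{Wei}). Under the hypothesis of the preceding remark, that $\beta=\mu\,W\,dw$ has no real periods or that $M$ is simply connected, the real $1$-form $2\mathfrak{Re}(\beta)=\mu\,W\,dw+\overline{\mu\,W}\,d\overline{w}$ is exact, so $f$ is a well-defined $\semi$-valued map; matching the $dw$-component of $df$ gives $f_w=\mu\,W$, whence $[f_w]=[W]\in Q_{pos}^2$ and $f$ is a positive spacelike conformal immersion. The negative case is identical, replacing $Q_{pos}^2$ by $Q_{neg}^2$ and the sign of $\ll f_w,\overline{f_w}\gg$.

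The cleanest step is the integral identity, which is pure exactness. The real work is the global choice of the Weierstrass data $(\mu,x,y)$ across the exceptional locus $f_w^{3}=\pm i f_w^{4}$, and, in the converse, isolating exactly when $2\mathfrak{Re}(\beta)$ is closed so that the period condition of the remark guarantees that $f$ is single-valued. I expect the bookkeeping around the exceptional locus, handled via the charts of $\Phi$ from the previous theorem, to be the main obstacle, though it is technical rather than deep.
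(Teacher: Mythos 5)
Your proposal is correct and follows essentially the same route as the paper, which states this proposition as an immediate consequence of the preceding remark (exactness of $df = 2\mathfrak{Re}(f_w\,dw)$ and path-independence) combined with the parametrization (\ref{eq}) of the quadric $Q^2$ from the homeomorphism theorem; your write-up simply makes explicit the details the paper leaves implicit, including the chart-switching on the locus $f_w^3 = \pm i f_w^4$. The only caveat, shared with the paper's own remark, is that in the converse direction exactness of $2\mathfrak{Re}(\mu W\,dw)$ on a simply connected $M$ still presupposes that this real $1$-form is closed, a condition that should strictly be stated as part of the hypothesis on the data $(\mu, x, y)$.
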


\begin{dfn}Let $f\colon M\to \semi$ be a parametric surface. We say that
  $f$ is a minimal surface if its mean curvature vector $H_{f}$ vanishes
  identically.
\end{dfn}

\begin{lemma} Suppose that $f\colon M\to \semi$ is given by
  (\ref{Wei}). Then, $f$ is a minimal surface if, and only if
  $\mu(w),x(w)$ and $y(w)$ are holomorphic functions from $M$ into
  $\mathbb{C}$.
\end{lemma}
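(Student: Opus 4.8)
The plan is to reduce minimality of $f$ to a harmonicity condition and then read off what it forces on the data $\mu,x,y$. Since $f$ is given by (\ref{Wei}) with $[W]\in Q_{pos}^{2}$ (respec. $Q_{neg}^{2}$), I first record that $f_w=\mu W$: writing $\phi=\mu W$, the identity $2\mathfrak{Re}(\phi\,dw)=\phi\,dw+\overline{\phi}\,d\overline{w}$ applied to (\ref{Wei}) gives $f_w=\mu W$ and $f_{\overline w}=\overline{\mu W}$. In particular $\ll f_w,f_w\gg=\mu^{2}\ll W,W\gg=0$ because $[W]$ lies on the quadric $Q^{2}$, which is exactly the conformality relations $\langle f_u,f_u\rangle=\langle f_v,f_v\rangle$ and $\langle f_u,f_v\rangle=0$.

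Next I would invoke the standard fact that, for a conformal immersion, $f_{w\overline w}$ is a nowhere-zero multiple of the mean curvature vector. Differentiating $\ll f_w,f_w\gg=0$ with respect to $\overline w$ gives $\ll f_{w\overline w},f_w\gg=0$; since $f_{w\overline w}=\tfrac14\Delta f$ is real, this says that $f_{w\overline w}$ is orthogonal to the tangent plane, and a direct computation yields $f_{w\overline w}=\tfrac12\langle f_u,f_u\rangle\,H_{f}$. As $f$ is a spacelike immersion the scalar $\langle f_u,f_u\rangle$ is nowhere zero, so $f$ is minimal if and only if $f_{w\overline w}=\partial_{\overline w}(\mu W)=0$, i.e. if and only if each of the four components of $\mu W$ is holomorphic in $w$.

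It then remains to translate ``all components of $\mu W$ holomorphic'' into ``$\mu,x,y$ holomorphic''. With $W=(x+y,-i(x-y),1+xy,i(1-xy))$ as in (\ref{eq}), forming linear combinations of the components $\mu W^{1},\dots,\mu W^{4}$ gives
\[\mu x=\tfrac12\left(\mu W^{1}+i\,\mu W^{2}\right),\quad \mu y=\tfrac12\left(\mu W^{1}-i\,\mu W^{2}\right),\quad \mu=\tfrac12\left(\mu W^{3}-i\,\mu W^{4}\right).\]
For the implication $(\Leftarrow)$: if $\mu,x,y$ are holomorphic then each component of $\mu W$ is a polynomial in holomorphic functions, hence holomorphic, so $f$ is minimal. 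For $(\Rightarrow)$: minimality makes $\mu,\mu x,\mu y$ holomorphic; since $f_w=\mu W\neq0$ and $W\neq0$ (immersion), $\mu$ is nowhere zero, so $1/\mu$ is holomorphic and therefore $x=(\mu x)/\mu$ and $y=(\mu y)/\mu$ are holomorphic.

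The step I expect to be the main obstacle is the careful verification, in the neutral signature of $\semi$, of the equivalence ``$f$ minimal $\iff f_{w\overline w}=0$'': the sign bookkeeping in the complexified form $\ll\,,\gg$ and the identification of $f_{w\overline w}$ with the normalized mean curvature vector must be carried out with the index-$2$ metric, and in the negative case one must track the extra sign coming from $\langle f_u,f_u\rangle<0$. A secondary point requiring care is the treatment of points where $x$ or $y$ takes the value $\infty$, where $W$ must be replaced by one of the alternative representatives obtained in the Claim; there one either restricts to the open set on which $x,y$ are finite or reads holomorphicity as holomorphicity of the associated map into $\overline{\complex}$.
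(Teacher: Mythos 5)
Your proof is correct and takes essentially the same approach as the paper: conformality together with the Laplace--Beltrami identity $\pm H_f=\frac{2}{\lambda^{2}}f_{w\overline{w}}$ reduces minimality to $(\mu W)_{\overline{w}}=0$, from which holomorphy of $\mu$, $x$, $y$ is extracted. Your explicit inversion $2\mu x=\mu W^{1}+i\mu W^{2}$, $2\mu y=\mu W^{1}-i\mu W^{2}$, $2\mu=\mu W^{3}-i\mu W^{4}$, together with the nonvanishing of $\mu$ on an immersion, simply fills in the step the paper compresses into ``it follows from (\ref{eq})''.
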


\begin{proof}
  The induced metric of such immersions is
  $\pm ds^2(f)=\lambda^2 \delta_{ij}du^i du^j$, since $f$ is a conformal
  parametric surface with $[f_w]\in Q^2_{pos}$ (or $Q^2_{neg}$ if
  negative). The mean curvature vector is defined by the
  Laplace-Beltrami equation (See \cite{HO} for details), hence
  \[\pm H_f =\dfrac{2}{\lambda^2}\Delta_M f = \dfrac{2}{\lambda^2}f_{w\overline{w}}=\dfrac{1}{2\lambda^2}(f_{uu} + f_{vv})=0.\]
  That is $H_f=0$ if, and only if
  $\left( \mu W(x,y)\right)_{\overline{w}}=0$. It follows from
  (\ref{eq}) that $\mu_{\overline{w}}=0$, $x_{\overline{w}}=0$ and
  $y_{\overline{w}}=0$.
\end{proof}

\subsection{The cross product in $\semi$} Given three vectors
$x,y,z \in \semi$, we define their \emph{cross product} as the unique
vector $\mathfrak{X}(x,y,z)$ such that, for each vector $v \in \semi$,
the following equation holds:
\begin{equation}
  \omega(x,y,z,v) = - \omega(v,x,y,z) = \lpr{\mathfrak{X}(x,y,z)}{v}.
\end{equation}

Defining the real numbers $\Delta_{ijk}$, for $i < j < k$ and
$i,j,k = 1,2,3,4$, by
\[\Delta_{ijk} = \left\vert
    \begin{matrix}
      x^{i} & x^{j} & x^{k} \\ y^{i} & y^{j} & y^{k} \\ z^{i} & z^{j} & z^{k}
    \end{matrix}\right\vert,\] 
we obtain the formal determinant defined by Laplace expansion on the
first line
\[\mathfrak{X}(x,y,z) = - \left\vert
    \begin{matrix}
      -e_{1} & -e_{2} & e_{3} & e_{4} \\ x^{1} & x^{2} & x^{3} & x^{4} \\ 
      y^{1} & y^{2} & y^{3} & y^{4} \\ z^{1} & z^{2} & z^{3} & z^{4}
    \end{matrix}\right\vert.\]

In coordinates, it takes the form
\[\mathfrak{X}(x,y,z) =(\Delta_{234}, -\Delta_{134}, -\Delta_{124}, \Delta_{123}).\]
Moreover, we have that
\[\omega(x,y,z,\mathfrak{X}(x,y,z)) = -
  \omega(\mathfrak{X}(x,y,z),x,y,z) = \sum_{i<j<k} (\Delta_{ijk})^{2}
  \geq 0.\]

Let $f\colon M\to \semi$ be a (positive or negative) spacelike conformal
immersion with normal bundle given by an orthonormal basis
$\left\lbrace A,B\right\rbrace$. From
$f_{w} = \frac{1}{2}(f_{u} - i f_{v})$, we have
\[ \mathfrak{X}(f_{u},A,B) =
  f_{v}\quad\mbox{and}\quad\mathfrak{X}(f_{v},A,B) = -f_{u},\] assuming
the positive orientation of the frame $\{A,B,f_u,f_{v}\}$. Hence,
\[\mathfrak{X}(f_{w},A,B) = \frac{1}{2}(\mathfrak{X}(f_{u},A,B) - i\mathfrak{X}(f_{v},A,B)) = 
  \frac{1}{2}(f_{v} + i f_{u})= if_{w}.\]

\subsection{A Bernstein-type problem}
The example \ref{exa} suggests the construction of spacelike parametric
surfaces in $\semi$ that are graphs of holomorphic functions
$Z(w)=\phi(w) + i\psi(w)$, defined in all complex plane. In this case,
the surface is given by
\begin{equation}
  f(w)=\left( u,v,\phi(u,v),\psi(u,v)\right).
\end{equation}
The induced metric for those positive (negative) spacelike conformal
surfaces is such that $\lambda^2(f)=-1+\vert Z'(w)\vert^2$. By the small
Picard Theorem of complex analysis, to obtain $\lambda^2>0$ in all
$\mathbb{C}$, the holomorphic function $Z'(w)$ need omits more than two
points, thus it is a constant function. Note that the mean curvature
vector $H_{f}$ vanishes, once $f_{w\overline{w}}(w)=0$ for each
$w\in \mathbb{C}$.

\begin{prop}Let $f\colon \mathbb{C}\to \semi$ to be the graph of a
  holomorphic function $Z(w)$. If it is a (positive or negative)
  spacelike parametric surface in $\semi$, then $Z(w)=aw+b$ for some
  constants $a,b\in \mathbb{C}$.
\end{prop}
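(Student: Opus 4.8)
The plan is to read off the metric of the graph directly from the definition of a conformal spacelike immersion and then apply the Bernstein-type argument already foreshadowed in the paragraph preceding the statement. First I would write $f(w)=(u,v,\phi(u,v),\psi(u,v))$ where $Z(w)=\phi+i\psi$ is holomorphic, and compute
\[
f_w=\tfrac12(f_u-if_v)=\Bigl(\tfrac12,-\tfrac{i}{2},\phi_w,\psi_w\Bigr),
\]
using that $Z$ holomorphic means $\phi_u=\psi_v$, $\phi_v=-\psi_u$, so that $\phi_w=\tfrac12(\phi_u-i\phi_v)$ and similarly for $\psi$. The key computation is $\ll f_w,f_w\gg$ and $\ll f_w,\overline{f_w}\gg$ with respect to the index-$2$ form: since $Z'(w)=\phi_w+i\psi_w$ one finds $\ll f_w,f_w\gg=-\tfrac14-(-\tfrac14)+\phi_w^2+\psi_w^2$, and because $\phi_w^2+\psi_w^2=(\phi_w+i\psi_w)(\phi_w-i\psi_w)$ vanishes by the Cauchy--Riemann relations in the combination that kills one factor, the immersion is automatically conformal, i.e. $[f_w]\in Q^2$. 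The quantity controlling the causal character is $\lambda^2(f)=\pm\ll f_w,\overline{f_w}\gg$, and a direct expansion gives $\lambda^2(f)=-1+|Z'(w)|^2$, exactly as asserted in the text.

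Next I would invoke the spacelike hypothesis. For $f$ to be a positive (respec. negative) spacelike conformal immersion on all of $\mathbb{C}$ we need $[f_w]\in Q^2_{pos}$ (respec. $Q^2_{neg}$) everywhere, hence $\lambda^2(f)>0$ everywhere, which by the formula just derived forces $|Z'(w)|^2>1$ for every $w\in\mathbb{C}$. In particular $Z'(w)\neq 0$ and $Z'$ omits the entire closed unit disk $\overline{U}=\{\,|\zeta|\le 1\,\}$ in its range. The heart of the argument is then the classical Liouville/small Picard phenomenon: an entire function whose image avoids a nonempty open set (here even a whole disk) must be constant. I would cite the small Picard Theorem precisely as the preceding paragraph does, concluding that $Z'\equiv a$ for some constant $a\in\mathbb{C}$ with $|a|\ge 1$ (indeed $|a|>1$ by strictness).

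Finally, integrating $Z'\equiv a$ yields $Z(w)=aw+b$ for constants $a,b\in\mathbb{C}$, which is the claimed conclusion. I would also remark that such an $f$ is automatically minimal, since $f_{w\overline{w}}=(\tfrac12,-\tfrac{i}{2},\phi_{w\overline{w}},\psi_{w\overline{w}})=0$ because the holomorphic components have vanishing mixed derivative, matching the observation in the text. The only genuine subtlety, and the step I expect to be the main obstacle to state cleanly, is the passage from ``$\lambda^2(f)>0$ at every point'' to ``$Z'$ omits an open set'': one must be careful that the positivity is required globally on all of $\mathbb{C}$ (not merely on a domain, as in Example~\ref{exa} where the surface is only spacelike on $U$ or $\Omega$), for it is precisely the entire-domain hypothesis that makes Picard applicable and rules out the non-constant behaviour seen in the local example.
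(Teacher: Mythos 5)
Your computations are all correct as far as they go: $f_w=(\tfrac12,-\tfrac{i}{2},\phi_w,\psi_w)$, the Cauchy--Riemann identity $\phi_w-i\psi_w=0$ forcing $\ll f_w,f_w\gg\,=0$, and the conformal factor $-1+|Z'(w)|^2$. For the \emph{positive} case your argument is exactly the paper's: $|Z'|>1$ on all of $\mathbb{C}$, so $Z'$ omits the closed unit disk, and the small Picard theorem forces $Z'$ to be constant. The genuine problem is the negative case, which the statement also covers. If $[f_w]\in Q^2_{neg}$, then by definition $\ll f_w,\overline{f_w}\gg\,<0$, so your own formula gives $-1+|Z'(w)|^2<0$, i.e.\ $|Z'(w)|<1$ everywhere --- not $|Z'(w)|^2>1$ as you assert. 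The slip comes from conflating the paper's convention $\pm ds^2=\lambda^2(du^2+dv^2)$ with $\lambda^2>0$ (sign extracted) with the literal conformal factor $-1+|Z'|^2$ (which is \emph{negative} on a negative spacelike graph). As written, the step ``hence $\lambda^2(f)>0$ everywhere, which forces $|Z'(w)|^2>1$'' is false for negative spacelike surfaces, so your proof only establishes half of the proposition.

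The repair is immediate, in either of two ways. In the negative case $|Z'|<1$ everywhere, so $Z'$ is a bounded entire function and Liouville's theorem alone (no Picard needed) gives $Z'$ constant. Alternatively, both cases can be treated at once, which is what the paper's paragraph is implicitly doing: spacelikeness of either sign on all of $\mathbb{C}$ means $|Z'(w)|\neq 1$ for every $w$, so the entire function $Z'$ omits the entire unit circle --- far more than two points --- and little Picard applies directly. Either way $Z'\equiv a$ and $Z(w)=aw+b$. Your closing remarks on minimality ($f_{w\overline{w}}=0$) and on why the hypothesis of an entire domain is what separates this from Example~2.6 are correct and consistent with the paper.
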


Now, we will give an example of a non-trivial conformal parametric
surface in $\semi$, which has mean curvature vector $H \equiv 0$, defined
in all of the complex plane and it is free of singularities.

\begin{example}
  Let $x(w) = w$ and $y(w) = iw$, $w\in \mathbb{C}$ in the formula
  (\ref{Wei}). Then, we obtain
  \[f(w) = 2 \mathfrak{Re} \left(\frac{w^{2} - iw^{2}}{2}, i\frac{w^{2}
        + iw^{2}}{2}, \frac{3w - iw^{3}}{3}, i \frac{3w +
        iw^{3}}{3}\right)\] and
  $f_{w} = (w - iw, i(w + i w), 1 -iw^{2}, i(1 + iw^{2}))$. A direct
  computation shows that $\ll f_{w},f_{w} \gg \, = 0$,
  $\ll f_{w},{\overline{f_{w}}} \gg \, = 1 + \vert w \vert^{4} \geq 1$
  and $f_{w \overline{w}} = 0$. Therefore, $[f_{w}] \in Q_{pos}^{2}$ and
  $H_{f} = 0$.
\end{example}

\subsection{The normal bundle}

We have the following algebraic results:

\begin{lemma}
  Let $[v] = [v_1 + iv_2] \in Q_{neg}^{2}$ be and
  $[u] = [u_1 + i u_2] \in Q_{pos}^{2}$. The set $\{v_1,v_2,u_1,u_2\}$
  is an orthogonal basis of $\semi$ if, and only if
  $\ll v,u \gg \,= \,\ll v,\overline{u} \gg=0.$
\end{lemma}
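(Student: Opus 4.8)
The plan is to reduce the whole statement to a direct computation of the four real inner products $\lpr{v_i}{u_j}$ for $i,j\in\{1,2\}$ in terms of the two complex quantities $\ll v,u\gg$ and $\ll v,\overline u\gg$. First I would unwind the definition of the extended bilinear form: writing $v=v_1+iv_2$ and $u=u_1+iu_2$, the formula for $\ll\,,\,\gg$ gives
\[
  \ll v,u\gg \;=\; \bigl(\lpr{v_1}{u_1}-\lpr{v_2}{u_2}\bigr)
    + i\bigl(\lpr{v_1}{u_2}+\lpr{v_2}{u_1}\bigr),
\]
and replacing $u$ by $\overline u=u_1-iu_2$ flips the signs of the $u_2$-terms, yielding
\[
  \ll v,\overline u\gg \;=\; \bigl(\lpr{v_1}{u_1}+\lpr{v_2}{u_2}\bigr)
    + i\bigl(-\lpr{v_1}{u_2}+\lpr{v_2}{u_1}\bigr).
\]

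The key algebraic step is then to observe that these two complex equations are a nonsingular real-linear encoding of the four real pairings. Adding and subtracting the real parts recovers $\lpr{v_1}{u_1}$ and $\lpr{v_2}{u_2}$, while adding and subtracting the imaginary parts recovers $\lpr{v_1}{u_2}$ and $\lpr{v_2}{u_1}$. Consequently $\ll v,u\gg=\ll v,\overline u\gg=0$ holds if and only if all four mixed products $\lpr{v_i}{u_j}$ vanish, i.e.\ if and only if each of $v_1,v_2$ is orthogonal to each of $u_1,u_2$. This is exactly the cross-orthogonality half of the claim, so this single linear-algebra observation supplies the heart of the equivalence in both directions simultaneously.

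It remains to promote ``mutually orthogonal across the two planes'' to ``$\{v_1,v_2,u_1,u_2\}$ is an orthogonal basis.'' Here I would invoke the standing hypotheses $[v]\in Q^2_{neg}$ and $[u]\in Q^2_{pos}$. Since $[v]\in Q^2_{neg}$ means $\ll v,v\gg=0$ with $\ll v,\overline v\gg<0$, expanding exactly as above gives $\lpr{v_1}{v_1}=\lpr{v_2}{v_2}<0$ and $\lpr{v_1}{v_2}=0$, so $\{v_1,v_2\}$ is already an orthogonal (indeed $\epsilon\lambda$-isothermic, negative) basis of its plane; symmetrically $[u]\in Q^2_{pos}$ forces $\{u_1,u_2\}$ to be an orthogonal positive pair. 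Thus each of $v_1,v_2$ is a nonzero negative vector and each of $u_1,u_2$ a nonzero positive vector, and combined with the cross-orthogonality established above the full set $\{v_1,v_2,u_1,u_2\}$ is orthogonal; a negative-definite $2$-plane and a positive-definite $2$-plane meeting orthogonally span all of $\semi$, so the set is a basis, matching the orthonormality assertion recorded for such frames earlier in the text.

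The main obstacle is not conceptual but bookkeeping: I must take care that the equivalence is genuinely two-sided. The forward direction (orthogonal basis $\Rightarrow$ the two complex products vanish) is immediate from the expansions, but for the reverse direction one must use the quadric conditions $\ll v,v\gg=0$ and $\ll u,u\gg=0$ to guarantee that the within-plane structure is already orthogonal, so that cross-orthogonality alone upgrades to a genuine orthogonal basis rather than merely an orthogonal pairing between two arbitrary spanning sets. I would therefore state explicitly, before the final computation, that $[v]\in Q^2_{neg}$ and $[u]\in Q^2_{pos}$ are exactly what rule out degeneracies and make the count $2+2=4$ with nondegenerate signature on each factor.
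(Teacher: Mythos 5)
Your proposal is correct and follows essentially the same route as the paper: the heart of both arguments is expanding $\ll v,u\gg$ and $\ll v,\overline{u}\gg$ into the four real products $\lpr{v_i}{u_j}$ and observing that the vanishing of the two complex quantities is equivalent to the vanishing of all four. In fact your write-up is more complete than the paper's, which stops at the cross-orthogonality conclusion $[v]\perp[u]$ and leaves implicit the step you spell out explicitly — that the quadric conditions $\ll v,v\gg=0$, $\ll v,\overline{v}\gg<0$ (and their positive counterparts for $u$) force $\{v_1,v_2\}$ and $\{u_1,u_2\}$ to be orthogonal definite pairs, so that the four mutually orthogonal vectors indeed form a basis of $\semi$.
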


\begin{proof}
  From
  \[\ll v,u\gg \, = (\lpr{v_1}{u_1} - \lpr{v_2}{u_2}) + i(\lpr{v_1}{u_2} + \lpr{v_2}{u_1}) = 0\]  and
  \[\ll v,\overline{u}\gg \, = (\lpr{v_1}{u_1} + \lpr{v_2}{u_2}) +
    i(\lpr{v_1}{u_2} - \lpr{v_2}{u_1}) = 0\] 
  it follows that
  \[\lpr{v_1}{u_1} = 0,\quad\lpr{v_1}{u_2} = 0,\quad\lpr{v_2}{u_1} =
    0\quad\mbox{and}\quad \lpr{v_2}{u_2}=0.\] Therefore, each vector of
  $[v]$ is orthogonal to any vector of $[u]$. In symbols,
  $[v] \perp [u]$.
\end{proof}

\begin{lemma}
  The planes $[u],[v]\in Q^2$, given by
\begin{align*}
  [u] &= [x + y, -i(x - y), 1 + xy, i(1 - xy)]\mbox{ and}\\
  [v] &= [1 + ab, i(1 - ab), a + b, -i(a - b)],
\end{align*}
are mutually orthogonal if, and only if either
\[a = x \mbox{ and } b = \overline{y}\quad\mbox{ or }\quad a =
  \dfrac{1}{\overline{x}}\mbox{ and } b = \dfrac{1}{y}.\]
\end{lemma}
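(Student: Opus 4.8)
The plan is to reduce the orthogonality condition to the two complex equations from the previous lemma, namely $\ll v,u\gg\,=\,\ll v,\overline{u}\gg\,=0$, and then solve them directly for $a,b$ in terms of $x,y$. First I would compute the symmetric bilinear form $\ll u,v\gg$ using the given homogeneous representatives. Writing $u=(u^1,u^2,u^3,u^4)$ and $v=(v^1,v^2,v^3,v^4)$ with the explicit entries, the form is the $\complex$-bilinear extension $\ll u,v\gg\,=-u^1v^1-u^2v^2+u^3v^3+u^4v^4$, so this is a matter of substituting and simplifying. The representatives are chosen precisely so that products like $(x+y)(1+ab)$ and $(-i(x-y))\cdot i(1-ab)$ combine nicely, and I expect the factors of $\pm i$ to collapse the four terms into a clean product.

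The key computational step I anticipate is that $\ll u,v\gg$ factors as a product of simple linear expressions in $a$ and $b$. Concretely, I would verify that $\ll u,v\gg\,=2(xa-1)(yb-1)$ or some similar product of the form $(\text{factor in }a)\cdot(\text{factor in }b)$, up to a constant; the vanishing condition then splits into $a=1/x$ or $b=1/y$ type alternatives. In parallel I would compute $\ll u,\overline{v}\gg$, which differs only by conjugating $v$ (so $a\mapsto\overline a$, $b\mapsto\overline b$, and the explicit $i$'s change sign), and I expect it to factor as a product of expressions like $(a-x)(b-\overline{y})$ or $(\overline a\, x-1)(\cdots)$. The main work is bookkeeping: each of the two equations, being a product of two factors, yields two cases, so together they give four a priori combinations.

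Once both forms are factored, I would impose that \emph{both} $\ll v,u\gg$ and $\ll v,\overline u\gg$ vanish simultaneously and sort through the resulting case analysis. The expected outcome is that the four combinations collapse to exactly the two solution families stated: $(a,b)=(x,\overline y)$ and $(a,b)=(1/\overline x,\,1/y)$, with the other two mixed cases either coinciding with these or being excluded because they would force the representative $[v]$ to degenerate (for instance by driving a homogeneous coordinate to a value incompatible with $[v]\in Q^2$, or by collapsing $u$ or $v$ onto $\pi_{12}$ or $\pi_{34}$, where the passage to the $(x,y)$ and $(a,b)$ coordinates breaks down, cf.\ the limiting cases in the Claim above). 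The converse direction is immediate: substituting each of the two stated pairs back into the factored forms gives $0$, so I would record it in one line.

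The hard part will not be any single identity but keeping the conjugations straight and handling the boundary cases where denominators such as $x$, $\overline x$, or $y$ vanish, since there the homogeneous coordinates used to parametrize $[u]$ and $[v]$ are exactly the ones sent to $\infty$ in the Claim; I would either treat those points separately using the limiting representatives or note that generic $[u],[v]$ suffice and the orthogonality relation extends by continuity on $Q^2$.
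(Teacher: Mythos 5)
Your proposal is correct and is essentially the paper's own proof: the paper simply computes and factors $\ll u,v\gg \,=-2(x-a)(1-by)$ and $\ll u,\overline{v}\gg \,=-2(y-\bar{b})(1-\bar{a}x)$, exactly the two products of the form you anticipate, and leaves the four-way case analysis implicit. Your extra care about the mixed cases is sound---they force $|a|=1$ or $|b|=1$, i.e.\ $[v]\in Q^{2}_{null}$, so they occur only for degenerate (null) planes---which is more than the paper itself records.
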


\begin{proof}
  It follows from
  \begin{align*}
    \ll u,v\gg \,
    &= -(x + y + abx + aby) - (x - y - abx + aby)  \\ 
    &\qquad + (a + b + axy + bxy) + (a - b - axy + bxy)\\
    &= -2(x - a)(1 - by),
  \end{align*}
  and
  \begin{align*}
    \ll u,\overline{v}\gg \,
    &= -(x + y + \bar{a} \bar{b}x + \bar{a}\bar{b}y) + (x - y
      -\bar{a}\bar{b}x + \bar{a}\bar{b}y)  \\
    &\qquad+ (\bar{a} + \bar{b} + \bar{a}xy + \bar{b}xy) - (\bar{a} -
      \bar{b} - \bar{a}xy + \bar{b}xy)\\
    &=  - 2(y - \bar{b})(1 - \bar{a}x).
  \end{align*}
\end{proof}

Let $[w]=[ x + y, -i(x - y), 1 + xy, i(1 - xy)]\in Q$. Considering the
operator $\tilde{J} \colon Q^{2} \to Q^{2}$ defined by
\[\tilde{J}\left([w]\right) = \left[ 1 + x \overline{y}, i(1 - x\overline{y}), x + \overline{y},
    -i(x - \overline{y}) \right]\] we note that
$\tilde{J}([z]) \perp [z]$ and have the

\begin{prop}\label{jot}
  The normal operator $\tilde{J}$ is a smooth injective map from $Q^{2}$
  onto $Q^{2}$ satisfying:
  \[\tilde{J}(Q_{neg}^{2}) = Q_{pos}^{2}, \quad\tilde{J}(Q_{pos}^{2}) =
    Q_{neg}^{2}\quad \mbox{and}\quad\tilde{J}(Q_{null}^{2}) =
    Q_{null}^{2}.\] Furthermore, if we take
  $[W(x,y)] = [x + y, -i(x - y), 1 + xy, i(1 - xy)]$ then
  \[\tilde{J}([W(x,y)]) = [W(x,1/\overline{y})].\] 
  As expected, it is an idempotent operator on $Q^{2}$, that is,
  $\tilde{J} \circ \tilde{J} = Id$.
\end{prop}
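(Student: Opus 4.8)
The plan is to push every assertion through the homeomorphism $\Phi$ of the first theorem, under which $\tilde{J}$ becomes the elementary map $j(x,y)=(x,1/\overline{y})$ of $\overline{\complex}\times\overline{\complex}$, and then to read off the causal behaviour from a single Hermitian computation. Recall from that theorem that $\Phi([W(x,y)])=(x,y)$, since for $z=W(x,y)$ one has $z^{1}+iz^{2}=2x$, $z^{1}-iz^{2}=2y$ and $z^{3}-iz^{4}=2$.

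First I would establish the explicit formula $\tilde{J}([W(x,y)])=[W(x,1/\overline{y})]$ by rescaling. Writing $W(x,1/\overline{y})=(x+1/\overline{y},\,-i(x-1/\overline{y}),\,1+x/\overline{y},\,i(1-x/\overline{y}))$ and multiplying by the nonzero scalar $\overline{y}$ gives $(1+x\overline{y},\,i(1-x\overline{y}),\,x+\overline{y},\,-i(x-\overline{y}))$, which is exactly the representative defining $\tilde{J}([w])$; since projective classes are scale invariant this proves the identity. It also shows $\tilde{J}$ is well defined, because by the first theorem every class in $Q^{2}$ equals $[W(x,y)]$ for a unique $(x,y)=\Phi([w])\in\overline{\complex}\times\overline{\complex}$, whence $\Phi\circ\tilde{J}=j\circ\Phi$, i.e. $\tilde{J}=\Phi^{-1}\circ j\circ\Phi$; the fibres $y\in\{0,\infty\}$ and $x=\infty$ are covered by the same limiting classes used in the proof of that theorem.

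Second, the smoothness and bijectivity would follow formally. Since $\Phi$ is given by holomorphic rational expressions in the homogeneous coordinates it is in fact a diffeomorphism, not merely a homeomorphism. The assignment $y\mapsto 1/\overline{y}$ is complex conjugation followed by inversion, each a diffeomorphism of the Riemann sphere $\overline{\complex}$, so $j$ is a smooth diffeomorphism of $\overline{\complex}\times\overline{\complex}$ and hence $\tilde{J}=\Phi^{-1}\circ j\circ\Phi$ is a smooth bijection of $Q^{2}$. For idempotency I would simply compute $j(j(x,y))=j(x,1/\overline{y})=(x,1/\overline{(1/\overline{y})})=(x,y)$, using $\overline{1/\overline{y}}=1/y$, so that $\tilde{J}\circ\tilde{J}=Id$; this also re-proves injectivity and surjectivity at once.

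Third, for the causal trichotomy I would compute on the standard representative $\ll W(x,y),\overline{W(x,y)}\gg\,=2(1-|x|^{2})(1-|y|^{2})$, whose sign decides membership in $Q_{pos}^{2}$, $Q_{neg}^{2}$ or $Q_{null}^{2}$. Replacing $y$ by $1/\overline{y}$ replaces $1-|y|^{2}$ by $1-1/|y|^{2}=-(1-|y|^{2})/|y|^{2}$, reversing the sign of the second factor while leaving the first untouched; equivalently $j$ interchanges the discs $|y|<1$ and $|y|>1$ and fixes the circle $|y|=1$. Hence $\tilde{J}$ carries $Q_{pos}^{2}$ into $Q_{neg}^{2}$, $Q_{neg}^{2}$ into $Q_{pos}^{2}$, and $Q_{null}^{2}$ into itself, and since $\tilde{J}$ is an involution these inclusions are equalities. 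The step I expect to be most delicate is the bookkeeping on the boundary fibres $y=0,\infty$ (and $x=\infty$), where the chosen representative $W$ degenerates and one must rescale it before evaluating $\ll\,\cdot\,,\overline{\,\cdot\,}\gg$; there the homeomorphism $\Phi$ is what guarantees that $\tilde{J}$ remains well defined and continuous across these exceptional classes and that the sign analysis survives in the limit.
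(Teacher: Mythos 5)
Your proposal is correct, and it actually supplies something the paper does not: Proposition~\ref{jot} is stated there with no proof at all, resting implicitly on the definition of $\tilde{J}$ and the two orthogonality lemmas that precede it. Your organization --- conjugating by $\Phi$ so that $\tilde{J}$ becomes $j(x,y)=(x,1/\overline{y})$ on $\overline{\complex}\times\overline{\complex}$ --- is the natural way to assemble exactly those computations: the rescaling $\overline{y}\,W(x,1/\overline{y})=(1+x\overline{y},\,i(1-x\overline{y}),\,x+\overline{y},\,-i(x-\overline{y}))$ proves the displayed formula and well-definedness, the identity $j\circ j=\mathrm{id}$ gives injectivity, surjectivity and $\tilde{J}\circ\tilde{J}=Id$ simultaneously, and the sign of the Hermitian quantity decides the trichotomy. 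Two remarks are in order. First, your key formula $\ll W(x,y),\overline{W(x,y)}\gg\,=2(1-|x|^{2})(1-|y|^{2})$ is correct and silently repairs the formula printed in the paper's proof of the homeomorphism theorem, $\ll z,\overline{z}\gg\,=2\mu\overline{\mu}(1-x\overline{y})(1-\overline{x}y)$, which as written equals $2|\mu|^{2}\vert 1-x\overline{y}\vert^{2}\geq 0$ and would make $Q_{neg}^{2}$ empty (for instance $W(0,2)=(2,2i,1,i)$ has $\ll W,\overline{W}\gg\,=-6$); without your corrected version the sign-flip argument could not even be run. Second, the two points where you go beyond what the paper literally provides are harmless but worth one line each: smoothness needs $\Phi$ to be a diffeomorphism, not just a homeomorphism, which follows because $Q^{2}$ is a nondegenerate (hence smooth) quadric and $(x,y)\mapsto[W(x,y)]$ is its Segre parametrization; and on the exceptional fibres you can avoid any limiting argument by evaluating the defining formula of $\tilde{J}$ directly --- e.g.\ at $y=0$ it returns $[1,i,x,-ix]$, which is precisely the class $\lim_{b\to\infty}[W(x,b)]$ --- so the formula $\tilde{J}([W(x,y)])=[W(x,1/\overline{y})]$ and the sign analysis hold there by inspection rather than by continuity.
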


\section{A Cauchy problem for isoclinic spacelike surfaces in
  $\semi$}\label{sec:cauchyp}

From now on, we will assume that $I=(-r,r)$ is a non-empty interval in
the real line $\mathbb{R}\subset \mathbb{C}$ and $M\subset \mathbb{C}$
is a simply connected and connected open subset such that $I\subset M$.

We would like to recall the following facts of complex analysis:
\begin{enumerate}
  \item Let $x, y\colon M\to\complex$ be two holomorphic functions such
  that $x(y)=y(y)$, forall $t\in I$. Since $I$ has accumulation points,
  we have that $x(w)=y(w)$, forall $w\in M$.
  \item Given any real analytic function $x\colon I\to\real$, the line
  integral \[x(w) = x(0) + \int_0^w x'(t)dt\] is a holomorphic function
  $x\colon M\to \mathbb{C}$, which is the unique holomorphic extension
  of the real valued function $x(t)$.
\end{enumerate}

Now, if $c(t)=(c_1(t),c_2(t),c_3(t),c_4(t))$ is a real analytic curve
from $I$ into $\semi$, the holomorphic function
$C(w)=(c_1(w),c_2(w),c_3(w),c_4(w))$ from $M$ into $\mathbb{C}^4$, where
$c_i(w)$ is the holomorphic extension of the function $c_i(t)$, is the
unique holomorphic extension of the curve $c(t)$.

We propose the following Cauchy problem, which consists of constructing
an isoclinic surface under certain conditions.

\begin{question}\label{pro}
  Given a positive (respec. negative) real analytic curve
  $c\colon I\to \semi$, thus $\langle c'(t),c'(t)\rangle > 0$
  (respec. $<0$), construct a conformal immersion
  $f\colon M\to \semi$ satisfying the following conditions:
  \begin{enumerate}
    \item $f(t,0)=c(t)$ for each $t\in I$ (extension condition),
    \item $H_f(w)=0$ for each $w\in M$ (minimality condition),
    \item $\left[ f_w(w)\right]\in Q^2_{pos}$ (respec.
    $\left[ f_w(w)\right]\in Q^2_{neg}$) and $f(M)$ is isoclinic to
    $\pi_{34}$ (respec. $pi_{12}$), for each $w\in M$.
  \end{enumerate}
\end{question}

From the first condition, considering a conformal parameter $w=u+iv$ for
$M$, a solution will satisfy $f_u(t,0)=c'(t)$ and hence it should have
the vector field $f_v(t,0)$ to be pointwise orthogonal to $c'(t)$, with
$\langle f_v(t,0),f_v(t,0)\rangle =\langle c'(t),c'(t)\rangle$.

Using the linear transformation given by (\ref{trans}), we define an
isoclinic distribution, along our curve $c(t)$, by
\[\D(t)=\left[ c'(t) - iL(c'(t))\right],\quad t\in I.\] 
With the usual extension of the operator $L$ to the complex vector space
$\mathbb{C}^4$, also denoted by $L$, we have the following map
$f\colon M\to \semi$:
\begin{equation*}
  f(w)=c(0) + \mathfrak{Re} \int_0^w (C'(\xi) - iL(C'(\xi)))d\xi,
\end{equation*} 
where $C(w)$ is the unique holomorphic extension of $c(t)$.

Since $C(t,0)=c(t)$ we have that $f(t,0)=c(t)$. It follows from
$f_w(w)=\dfrac{1}{2}(C'(w)-iL(C'(w)))$ that $f_{w\overline{w}}=0$. To
see that this map satisfies the conditions in our problem, we need of
the following algebraic lemma:

\begin{lemma}\label{idn}
  Let $z=x+iy\in \mathbb{C}^4$ be with $x$ and $y$ positive
  (respec. negative) vectors in $\semi$. Then
  \[w=z + iL(z) = x - L(y) + i(L(x) + y)\] satisfies  $\ll w,w\gg=0$ and
  $\ll w,\overline{w}\gg > 0$ (respec. $\ll w,\overline{w}\gg < 0$).
\end{lemma}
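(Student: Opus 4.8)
The plan is to reduce everything to three structural properties of $L$ and then track a single real quantity. From the matrix (\ref{trans}) one checks directly that $L$ is $\lpr{\cdot}{\cdot}$-orthogonal, $\lpr{L(a)}{L(b)} = \lpr{a}{b}$, that it is skew-adjoint, $\lpr{L(a)}{b} = -\lpr{a}{L(b)}$ (equivalently $\lpr{v}{L(v)} = 0$ for every $v$), and that $L^{2} = -I$. Since $\ll\cdot,\cdot\gg$ is exactly the $\mathbb{C}$-bilinear extension of $\lpr{\cdot}{\cdot}$, these identities persist verbatim for the complexified $L$ on $\mathbb{C}^{4}$: $\ll L(a),L(b)\gg = \ll a,b\gg$ and $\ll L(a),b\gg = -\ll a,L(b)\gg$. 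I would record these first, since both claimed identities follow formally from them.

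For $\ll w,w\gg = 0$, expanding $w = z + iL(z)$ by bilinearity gives $\ll w,w\gg = \ll z,z\gg - \ll L(z),L(z)\gg + 2i\ll z,L(z)\gg$; the first two terms cancel by orthogonality, and $\ll z,L(z)\gg = 0$ because skew-adjointness together with symmetry force $\ll z,L(z)\gg = -\ll z,L(z)\gg$. Conceptually, decomposing $\mathbb{C}^{4} = E_{+}\oplus E_{-}$ into the $\pm i$-eigenspaces of the complexified $L$, a one-line computation gives $w = 2z_{-}$ with $z_{-}$ the $E_{-}$-component, and each eigenspace is totally $\ll\cdot,\cdot\gg$-isotropic, so $\ll w,w\gg = 4\ll z_{-},z_{-}\gg = 0$; this also exhibits the degenerate case $z\in E_{+}$, where $w = 0$.

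For the second identity I would write $w = w_{1} + iw_{2}$ with $w_{1} = x - L(y)$ and $w_{2} = L(x) + y$. The imaginary part of $\ll w,\overline{w}\gg$ vanishes by symmetry, and its real part is $\lpr{w_{1}}{w_{1}} + \lpr{w_{2}}{w_{2}}$. Using orthogonality and skew-adjointness of $L$ one finds $\lpr{w_{1}}{w_{1}} = \lpr{w_{2}}{w_{2}} = \lpr{x}{x} + \lpr{y}{y} - 2\lpr{x}{L(y)}$, so that
\[\ll w,\overline{w}\gg = 2\,\lpr{x - L(y)}{x - L(y)}.\]
Everything thus reduces to the sign of a single real number, i.e. to showing that $x - L(y)$ is again a positive (respectively negative) vector.

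This last point is where I expect the real difficulty to sit. By Proposition \ref{apl} the vector $L(y)$ is positive whenever $y$ is, so $x - L(y)$ is a difference of two positive vectors; but in the neutral signature of $\semi$ such a difference need not be positive, since $\lpr{x - L(y)}{x - L(y)} = \lpr{x}{x} + \lpr{y}{y} - 2\lpr{x}{L(y)}$ and the cross term $\lpr{x}{L(y)}$ is not controlled by $\lpr{x}{x}$ and $\lpr{y}{y}$ alone. Hence strict positivity cannot follow from $\lpr{x}{x},\lpr{y}{y} > 0$ by themselves, and the honest route is to use the provenance of $z$ in the intended application: there $z = C'(\xi)$ is the holomorphic extension of a positive real-analytic velocity $c'$, so $z$ restricts to a genuine positive real vector on $I$ (where $y = 0$ and $x - L(y) = x$), and the inequality then persists on a neighbourhood of $I$ by continuity and analyticity. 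I would therefore either restrict the statement to this controlled regime or supply the missing hypothesis guaranteeing that the $E_{-}$-component $z_{-}$ lands in the positive cone; pinning down that sufficient condition is the main obstacle.
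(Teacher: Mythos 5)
Your algebra is correct, and it exposes a genuine error in the paper's own proof of this lemma. For the identity $\ll w,w\gg \,=0$ your argument (either by direct expansion or via $w=2z_{-}$ in the $\pm i$-eigenspace decomposition of $L$) agrees in substance with the paper. For the second claim, the paper lists exactly the identities you use, namely $\lpr{L(a)}{L(b)}=\lpr{a}{b}$ and $\lpr{x}{L(y)}=-\lpr{L(x)}{y}$, but then asserts that ``direct computations'' give $\ll w,\overline{w}\gg \,=2(\lpr{x}{x}+\lpr{y}{y})$. That is a sign slip: in the expansion $\ll w,\overline{w}\gg \,=\lpr{x-L(y)}{x-L(y)}+\lpr{L(x)+y}{L(x)+y}$ the two cross terms are $-2\lpr{x}{L(y)}$ and $+2\lpr{L(x)}{y}$, and skew-adjointness makes these \emph{equal}, so they add to $-4\lpr{x}{L(y)}$ rather than cancel. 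Your formula $\ll w,\overline{w}\gg \,=2\lpr{x-L(y)}{x-L(y)}$ is the correct one; for instance $x=e_{3}$, $y=e_{4}$ gives $\ll w,\overline{w}\gg \,=8$, not the value $4$ predicted by the paper's formula.

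Consequently, your refusal to deduce strict positivity is not over-caution: the lemma as stated is false. Your own degenerate case already refutes it --- if $y=-L(x)$ with $x$ positive, then $y$ is positive and $w=0$, so $\ll w,\overline{w}\gg \,=0$ --- and the failure can even be strict: $x=(0,0,1,0)$ and $y=(0,-19/10,0,-2)$ are both positive vectors in $\semi$, yet $\ll w,\overline{w}\gg \,=2\lpr{x-L(y)}{x-L(y)}=-\tfrac{261}{50}<0$. The repair you outline is also the right one, and it is what the paper tacitly relies on: in the solution of Problem~\ref{pro} the vector $z=C'(w)$ is the holomorphic extension of $c'$, so $y=0$ along $I$, where $\ll w,\overline{w}\gg \,=2\lpr{c'(t)}{c'(t)}>0$; by continuity the inequality survives on some neighborhood $M$ of $I$, which is precisely why the existence theorem only claims a domain $M\supset I$ ``without metric singularities.'' So the lemma needs an additional hypothesis (for example $2\lpr{x}{L(y)}<\lpr{x}{x}+\lpr{y}{y}$, or $y$ sufficiently small), exactly as you indicate; with that proviso your proof is complete, whereas the paper's, as written, is not.
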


\begin{proof}
  Since
  \begin{align*}
    \langle L(y),L(y)\rangle =\langle y,y\rangle,&\quad \langle
    L(x),L(x)\rangle=\langle x,x\rangle\\
    \mbox{and}\quad\langle x,L(y)\rangle&= -\langle L(x),y\rangle,
  \end{align*}
  direct computations show that
  \[\ll w,w\gg=0\quad\mbox{and}\quad \ll w,\overline{w}\gg =2(\langle
    x,x\rangle + \langle y,y\rangle).\] Thus, being $x$ and $y$ positive
  (respec. negative) vectors in $\semi$, we have
  $\ll w,\overline{w}\gg>0$ (respec.  $\ll w,\overline{w}\gg<0$).
\end{proof}

\begin{remark}
  We see from the example~\ref{exa} that we may need to restrict the
  domain of the solutions of our problem, since we can obtain positive
  or negative isoclinic surfaces with metric singularities when we
  extend the given functions.
\end{remark}

We can establish the following result:

\begin{theor}
  Let $c\colon I\to \semi$ be a positive (respec. negative) real analytic
  curve. There exists a connected and simply connected open subset
  $M\subset \mathbb{C}$ with $I\subset M$ and $f\colon M\to\semi$ give
  by
  \begin{equation}\label{fun}
    f(w)=c(0) + \mathfrak{Re} \int_0^w (C'(\xi) - iL(C'(\xi)))d\xi,
  \end{equation} 
  such that its image is a positive (respec. negative), isoclinic and
  minimal conformal immersion, without metric singularities, such that
  $f(t,0)=c(t)$. Moreover, this is the unique solution of the Cauchy
  problem\,\ref{pro}.
\end{theor}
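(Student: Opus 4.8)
The plan is to verify that the map $f$ defined by (\ref{fun}) satisfies all three conditions of Problem\,\ref{pro}, establish that it is a genuine conformal immersion free of metric singularities near $I$, and finally prove uniqueness. The existence part is largely a matter of assembling the preliminary results already at hand, so I would organize it as a sequence of short verifications rather than one long computation.

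First I would set up the holomorphic data. Let $C(w)$ be the unique holomorphic extension of $c(t)$ guaranteed by the complex-analytic facts recalled before the problem statement, and write $f_w = \tfrac{1}{2}(C'(w) - iL(C'(w)))$. Applying Lemma\,\ref{idn} with $z = C'(w)$ — whose real and imaginary parts are positive (respec.\ negative) vectors at least along $I$, where $C'(t) = c'(t)$ and $\langle c'(t), c'(t)\rangle > 0$ (respec.\ $< 0$) by hypothesis — gives $\ll f_w, f_w\gg = 0$ and $\ll f_w, \overline{f_w}\gg > 0$ (respec.\ $< 0$). Thus $[f_w] \in Q^2_{pos}$ (respec.\ $Q^2_{neg}$), which is condition (3); the isoclinicity to $\pi_{34}$ (respec.\ $\pi_{12}$) follows because the construction uses precisely the operator $L$, so each tangent plane is of the form $\Span\{x, L(x)\}$ and Proposition\,\ref{apl} applies. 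Since $C$ is holomorphic, $f_w = \tfrac12(C' - iL(C'))$ is holomorphic in $w$, so $f_{w\overline{w}} = 0$, and by the minimality Lemma this gives $H_f = 0$, which is condition (2). The extension condition (1) is immediate: because $C(t,0) = c(t)$ is real-valued along $I$ and $f(w) = c(0) + \mathfrak{Re}\int_0^w f_w\,d\xi$, restricting to $v = 0$ recovers $f(t,0) = c(t)$, with $f_u(t,0) = c'(t)$ and $f_v(t,0) = -L(c'(t))$ orthogonal to $c'(t)$ of the same causal magnitude, as noted before the lemma.

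The main obstacle is the absence of metric singularities, i.e.\ guaranteeing a genuine immersion rather than merely a formal expression. The conformal factor satisfies $\pm\lambda^2 = \pm\ll f_w, \overline{f_w}\gg = 2(\langle x, x\rangle + \langle y, y\rangle)$ in the notation of Lemma\,\ref{idn}, with $x = \mathfrak{Re}\,C'$ and $y = \mathfrak{Im}\,C'$; along $I$ this equals $2\langle c'(t), c'(t)\rangle \ne 0$ and has the correct sign. The point is that $\lambda^2$ is continuous (indeed real-analytic) and nonvanishing on $I$, so by continuity it stays nonzero on an open neighborhood of $I$ in $\complex$. Here I would invoke the standing assumption that $M$ may be shrunk: the example and the remark preceding the theorem make explicit that one must restrict to a connected, simply connected open $M \supseteq I$ on which $\lambda^2$ retains its sign and stays bounded away from zero, which is exactly where the singularities of Example\,\ref{exa} are excluded. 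This neighborhood exists because $I$ is compact-exhaustible and $\lambda^2$ is nonzero on it; choosing $M$ to be a suitable simply connected tube around $I$ settles the matter and, since $M$ is simply connected, the line integral in (\ref{fun}) is path-independent by the Remark following the Weierstrass formula.

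For uniqueness I would argue as follows. Suppose $g\colon M \to \semi$ is any solution of Problem\,\ref{pro}. Condition (3) forces $[g_w] \in Q^2_{pos}$ (respec.\ $Q^2_{neg}$) to be isoclinic, so by Proposition\,\ref{apl} the imaginary part of each tangent plane is determined by $L$, giving $g_w = \tfrac12(G' - iL(G'))$ for a holomorphic $\complex^4$-valued function whose real part along $I$ is $c'$. The minimality condition (2) together with the Weierstrass representation forces the relevant data to be holomorphic, so $G'$ is holomorphic on $M$; the extension condition (1) pins down $G'(t) = c'(t)$ on $I$. Because $I$ has accumulation points in $M$ and $M$ is connected, the identity principle (fact (1) recalled before the problem) yields $G'(w) = C'(w)$ throughout $M$, and matching the constant $g(0) = c(0)$ gives $g = f$. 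The only subtlety worth flagging is that the isoclinic hypothesis is essential to eliminate the otherwise free choice of the orthogonal normal direction along the strip; without it one recovers the broader Björling data treated in the next section, and uniqueness fails.
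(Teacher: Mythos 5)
Your proof is correct and follows essentially the same route as the paper's: existence by applying Lemma~\ref{idn} and Proposition~\ref{apl} to $f_w=\tfrac{1}{2}(C'-iL(C'))$, shrinking $M$ so that the conformal factor keeps its sign, and uniqueness by showing that any solution $g$ has $g_w$ holomorphic (minimality) and forced to equal $f_w$ along $I$ (isoclinicity plus the extension condition), then invoking the identity principle. If anything, your write-up is more complete than the paper's own two-line proof, which leaves the identity-principle step for $g_w$ implicit; the only blemish is a harmless sign slip, since the construction gives $f_v(t,0)=L(c'(t))$ rather than $-L(c'(t))$.
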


\begin{proof}
  The prevoius Lemma ensures that (\ref{fun}) is a conformal immersion
  satisfying the conditions (2) and (3) of our problem. Now, if
  $g\colon M\to \semi$ is another solution, then $g$ satisfies the
  condition (1), thus $f$ and $g$ coincide on the interval $I$. Hence,
  they must coincide on the domain $M$.
\end{proof}

\begin{example}
  Let $\Psi(z)$ and $\Phi(z)$ be two holomorphic functions from $M$ into $\mathbb{C}$. The map 
  \[f(w)=\left( \dfrac{\Psi(w)+\overline{\Psi(w)}}{2},\dfrac{\Psi(w) -
        \overline{\Psi(w)}}{2i},\dfrac{\Phi(w) +
        \overline{\Phi(w)}}{2},\dfrac{\Phi(w)
        -\overline{\Phi(w)}}{2i}\right),\] parametrizes an isoclinic
  minimal surface. When $\vert\Psi'\vert < \vert\Phi'\vert$ we have a
  positive parametric surface and when
  $\vert\Psi'\vert > \vert\Phi'\vert$ it is a negative one. Furthermore,
  the set of metric singularities is
  \[Sing(f)=\left\lbrace w\in M \colon \vert \Psi'\vert = \vert
      \Phi'\vert\right\rbrace \subset S^4_2(0).\]
\end{example}
In fact, straightforward calculations shows that
$\langle f_w,f_w\rangle=0$ and the minimality of the surface. Also,
since $2f_w=\left( \Psi',-i\Psi',\Phi',-i\Phi'\right)$ it follows that
$L(f_v)=-f_u$ and $L(f_u)=f_v$. Thus, by Proposition\,\ref{apl}, the
tangent plane at each point of the surface is isoclinic to $\pi_{34}$
(respec. $\pi_{12}$).

\begin{theor}
  Let $f\colon M\to \semi$ be an isoclinic conformal immersion. Then,
  there exists two holomorphic functions $\Psi(z)$ and $\Phi(z)$ from
  $M$ into $\mathbb{C}$ such that
  \begin{equation}
    f(w)=\left(\mathfrak{Re}\Psi(w),\mathfrak{Im}\Psi(w),\mathfrak{Re}\Phi(w),\mathfrak{Im}\Phi(w)\right).
  \end{equation}
\end{theor}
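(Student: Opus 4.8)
The plan is to convert the isoclinic hypothesis into a first-order system on the four coordinate functions of $f$ and then read that system off as two independent pairs of Cauchy--Riemann equations. I would start from Definition \ref{cli}: at every point the tangent plane $\Span\{f_u,f_v\}$ is a positive (respectively negative) spacelike plane isoclinic to $\pi_{34}$ (respectively $\pi_{12}$). Because $f$ is conformal, $\{f_u,f_v\}$ is an $\epsilon\lambda$-isothermic basis of that tangent plane, positively oriented by the conformal parameter $w=u+iv$. The reciprocal half of Proposition \ref{apl} then applies directly and delivers the single vector identity $f_v=L(f_u)$, holding throughout $M$ in both the positive and the negative case.

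With $f=(f^{1},f^{2},f^{3},f^{4})$ and $L(v)=(-v^{2},v^{1},-v^{4},v^{3})$, the identity $f_v=L(f_u)$ unpacks into
\[f^{1}_{v}=-f^{2}_{u},\quad f^{2}_{v}=f^{1}_{u},\quad f^{3}_{v}=-f^{4}_{u},\quad f^{4}_{v}=f^{3}_{u}.\]
The first pair is exactly the Cauchy--Riemann system for $\Psi:=f^{1}+if^{2}$ and the second pair is the Cauchy--Riemann system for $\Phi:=f^{3}+if^{4}$. Since the coordinate functions of an immersion are smooth, these equations force $\Psi$ and $\Phi$ to be holomorphic maps $M\to\complex$, and by construction $f=(\mathfrak{Re}\Psi,\mathfrak{Im}\Psi,\mathfrak{Re}\Phi,\mathfrak{Im}\Phi)$, which is the assertion.

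I expect the one genuinely delicate point to be the orientation bookkeeping in the first step. A priori the isoclinic condition pins down $f_v$ only up to the sign of $L$, and the wrong sign $f_v=-L(f_u)$ would make $\Psi$ and $\Phi$ anti-holomorphic, destroying the target form. The resolution is that the definition of isoclinic immersion --- via the reciprocal in Proposition \ref{apl}, whose proof fixes orientations through the projections $pr_{12}$ and $pr_{34}$ --- already singles out the orientation for which $f_v=L(f_u)$; should a given parametrization carry the opposite orientation, precomposing with the anti-conformal reflection $w\mapsto\overline{w}$ restores the normalization. Once $f_v=L(f_u)$ is in hand, everything that remains is the mechanical identification of Cauchy--Riemann equations, mirroring the computation in the example following (\ref{fun}), where the very same frame relation $L(f_u)=f_v$ was exploited.
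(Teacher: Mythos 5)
Your proof is correct and follows essentially the same route as the paper: conformality makes $\{f_u,f_v\}$ an isothermic basis of the tangent plane, the reciprocal part of Proposition~\ref{apl} gives $f_v=L(f_u)$, and this identity unpacks into the Cauchy--Riemann systems for $\Psi=f^1+if^2$ and $\Phi=f^3+if^4$. Your closing paragraph on the sign/orientation ambiguity $f_v=\pm L(f_u)$ flags a subtlety the paper's proof leaves implicit, but the underlying argument is the same.
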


\begin{proof}
  Since the positive (negative) plane $\Span\{\hat{f}_u,\hat{f}_v\}$ is
  given by an isothermic basis, we must have, from Proposition
  \ref{apl}, that $f_v=L(f_u)$. Writing $f=(f^1,f^2,f^3,f^4)$ this
  implies, $f^1_u=f^2_v$ and $f^1_v=-f^2_u$, that is, $\Psi=f^1+if^2$ is
  a holomorphic function on $M$. Analogously, we have that
  $\Phi=f^3 + if^4$ is holomorphic and the result follows.
\end{proof}

In particular, for $\Psi(w)=w$ and $\Phi(w)=w^2$ we recover example \ref{exa}.

Recall that, for an abstract Riemannian surface endowed with metric
tensor $\pm ds^2= \lambda^2(u,v)(du^2 + dv^2)$, the Gauss curvature is
given by
\[K(w)=-\dfrac{1}{\lambda^2} \left( \left(
      \dfrac{\lambda_u}{\lambda}\right)_u + \left(
      \dfrac{\lambda_v}{\lambda}\right)_v \right).\]
Hence we have the:

\begin{corol}
  Any isoclinic conformal immersion, $f\colon \mathbb{C}\to \semi$
  (without metric singularities), is flat: $K(w)=0$ for all
  $w\in \mathbb{C}$.
\end{corol}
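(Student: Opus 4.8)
The plan is to substitute the representation from the preceding theorem into the given curvature formula and then exploit the fact that the domain is all of $\complex$. First I would recall that any isoclinic conformal immersion $f\colon\complex\to\semi$ has the form $f=(\mathfrak{Re}\,\Psi,\mathfrak{Im}\,\Psi,\mathfrak{Re}\,\Phi,\mathfrak{Im}\,\Phi)$ for entire functions $\Psi,\Phi$. Differentiating and using the inner product \eqref{eq:semiorid}, a direct computation gives the conformal factor $\lambda^{2}=\big|\,|\Phi'|^{2}-|\Psi'|^{2}\,\big|$. The hypothesis that $f$ has no metric singularities means $|\Phi'|^{2}-|\Psi'|^{2}$ never vanishes on $\complex$; being continuous and nowhere zero on a connected set, it has constant sign. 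Treating the positive case $|\Phi'|>|\Psi'|$ (the negative case $|\Psi'|>|\Phi'|$ being symmetric), this forces $\Phi'$ to be zero-free on all of $\complex$.

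The key step is then a Liouville argument. I would form the ratio $\rho=\Psi'/\Phi'$, which is holomorphic on all of $\complex$ because $\Phi'$ has no zeros, and which satisfies $|\rho|<1$ everywhere. A bounded entire function is constant, so $\rho\equiv c$ with $|c|<1$. This is precisely where the global hypotheses enter, in the same spirit as the Picard argument used in the Bernstein-type Proposition, and I expect it to be the crux of the matter: without the assumptions that the domain is all of $\complex$ and that there are no singularities (so that $\Phi'$ is globally zero-free and $\rho$ globally bounded), the conclusion fails, as Example~\ref{exa} already shows with its nonvanishing Gauss curvature on the disk.

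Finally I would rewrite the curvature formula. Since $\lambda_{u}/\lambda=(\log\lambda)_{u}$ and $\lambda_{v}/\lambda=(\log\lambda)_{v}$, the bracketed expression equals $\Delta(\log\lambda)$, so $K=-\lambda^{-2}\Delta(\log\lambda)$. From $\lambda^{2}=|\Phi'|^{2}(1-|\rho|^{2})$ I obtain $\log\lambda=\log|\Phi'|+\tfrac12\log(1-|c|^{2})$. The second summand is constant, while $\log|\Phi'|=\mathfrak{Re}(\log\Phi')$ is harmonic because $\Phi'$ is zero-free on the simply connected set $\complex$; hence $\Delta(\log\lambda)=0$ and therefore $K\equiv 0$. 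The only remaining work is the routine verification of the identity $\lambda^{2}=\big|\,|\Phi'|^{2}-|\Psi'|^{2}\,\big|$ and of the harmonicity of $\log\lambda$, neither of which presents any difficulty once the constancy of $\rho$ is established.
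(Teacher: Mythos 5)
Your proposal is correct and follows essentially the same route as the paper: invoke the representation $f=(\mathfrak{Re}\,\Psi,\mathfrak{Im}\,\Psi,\mathfrak{Re}\,\Phi,\mathfrak{Im}\,\Phi)$, apply Liouville's theorem to the ratio $\Psi'/\Phi'$ to get a constant $a$ with $\vert a\vert<1$, write $\lambda^{2}=\vert\Phi'\vert^{2}(1-\vert a\vert^{2})$, and conclude from the harmonicity of $\ln\vert\Phi'\vert$ that $K\equiv 0$. Your write-up is in fact slightly more complete than the paper's, since you justify that $\Phi'$ is zero-free (so the ratio is entire), treat the symmetric negative case, and verify the reduction of the curvature formula to $-\lambda^{-2}\Delta(\log\lambda)$, all of which the paper leaves implicit.
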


\begin{proof}
  If $0\leq \vert \Psi'\vert < \vert \Phi'\vert$, then by Liouville
  Theorem there exists $a\in \mathbb{C}$, with $\vert a\vert < 1$, such
  that $\vert \Psi'/\Phi'\vert = \vert a\vert$. Hence,
  $\epsilon ds^2=\vert \Phi'\vert^2(\vert a\vert^2 - 1)(du^2 +
  dv^2)$. Since $\Phi'$ is holomorphic and then
  $\Delta \ln\vert \Phi'\vert =0$, the surface is flat.
\end{proof}

\section{The Bj\"orling Problem}\label{sec:bjorlingp}

In this section we will deal with the following problem, known as the
Bj\"orling Problem.

\begin{question}\label{prob:bjorling}
  Let $I=\left(-r,r\right)$ be an open interval and, for any $t\in I$,
  $\left( c(t),A(t),B(t)\right)$ a given traid, where $c(t)$ is a
  positive (respec. negative) real analytic curve in $\semi$ and $\left\lbrace
    A(t),B(t)\right\rbrace$ is a family of orthonormal sets such that
  $\left[ A(t)+iB(t)\right]\in Q^2_{neg}$ (respec. $\left[
    A(t)+iB(t)\right]\in Q^2_{pos}$ and defines a real analytic
  distribution along the curve $c(t)$ pointwise orthogonal to the
  tangent vector $T'(t)=c'(t)/\Vert c'(t)\Vert$.
  
  Our aim is to construct a positive (resepc. negative) spacelike
  conformal immersion $f\colon M\to \semi$, with $I\subset M$,
  satisfying the following conditions:
  \begin{enumerate}
    \item $f(t,0)=c(t)$ for all $t\in I$,
    \item The normal bundle of the surface, along the curve $c(t)$,
    satisfies $N_{c(t)}f(M)=\left[ A(t)+iB(t)\right]$ for all $t\in I$,
    \item $H_f(w)=0$ for all $w\in M$.
  \end{enumerate}
\end{question}




To set and solve this problem we need to assume some good conditions for
the curve $c(t)$. We need, pointwisely, that the orthogonal complement
to the tangent vector $c'(t)$ is isomorphic to $\mathbb{R}^3_2$, when
the curve is positive, or isomorphic to $\mathbb{R}^3_1$, when the curve
is negative.

\begin{example}
  The curve
  \[c(t) = \left( \cos t,\sin t,2\cos \left( t/\sqrt{2}\right),2\sin
      \left( t/\sqrt{2}\right) \right)\] in $\semi$ is a positive curve
  such that $\lpr{c'}{c'} = 1$, $\lpr{c''}{c''} = 0$ and
  $\lpr{c'''}{c'} = \lpr{c'''}{c''} = 0$. Moreover, for $k=3,4,...$ it
  follows that $\langle c^{(k)},c^{(k)}\rangle < 0$. There is no
  positive vector orthogonal to $c'$, $c''$ and $c'''$, hence no
  positive spacelike surface containing the curve.
\end{example}

\begin{dfn}
  Let $c \colon I \to \real_{2}^{4}$ be a positive regular curve. We say
  that $c$ is a good curve, if there exists a frame
  \[\left\lbrace T(s),N(s),B(s),R(s)\right\rbrace_{s \in I}\] 
  which is an orthonormal positive basis of the space $\real_{2}^{4}$,
  satisfying the following conditions:
  \begin{enumerate}[label=(\roman*)]
    \item $c'(s) = v(s) T(s)$ and $\lpr{c'(s)}{c'(s)} = (v(s))^2 > 0$, 
    \item $N(s) \in \Span\{c'(s),c''(s)\}$,
    $\vert\langle N(s),N(s)\rangle\vert=1$,
    \item $B(s) \in \Span\{c'(s),c''(s),c'''(s)\}$ and $\vert\langle
    B(s),B(s)\rangle\vert=1$ and
    \item $R(s) = \mathfrak{X}(T(s),N(s),B(s))$.
  \end{enumerate}


\end{dfn}

In the same way, a definition for good negative curves can be given.

To solve the Problem \ref{prob:bjorling}, we will use the so called
Schwarz integral equation.

\begin{prop}(Schwarz integral equation) Let $I=(-r,r)$ be an open
  interval and $(c(t),A(t),B(t))_{t\in I}$ the triad as in Problem
  \ref{prob:bjorling}. If $M\subset\complex$ is an open, connected and
  simply connected set containg $I$, let the $f:M\to\complex$ to be the
  map given by
  \begin{equation}
    f(w) = c(0) + 
    \mathfrak{Re} \int_{0}^{w} \left(C'(\xi) - i
      \mathfrak{X}(C'(\xi),A(\xi),B(\xi))\right)d\xi,
  \end{equation}
  where $C(w)$, $A(w)$ and $B(w)$ are the (unique) analytic extensions
  of the curve $c(t)$ and the vector fields $A(t)$ and $B(t)$,
  respectively.

  Then it follows from (\ref{Wei}) that $f$ define a minimal conformal
  immersion such that $f(t,0)=c(t)$, such that its normal bundle along
  $c$ is $N_{c(t)}f(M)=\left[ A(t) + iB(t)\right]$.
\end{prop}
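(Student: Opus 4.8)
The plan is to verify that the map $f$ defined by the Schwarz integral equation satisfies all three conditions of Problem~\ref{prob:bjorling}, by reducing everything to the Weierstrass integral formula~(\ref{Wei}) together with the cross-product identities established earlier. The key structural observation is that the integrand $C'(\xi) - i\mathfrak{X}(C'(\xi),A(\xi),B(\xi))$ is exactly of the form $2f_w$, so that writing $f_w = \frac{1}{2}\left(C'(w) - i\mathfrak{X}(C'(w),A(w),B(w))\right)$ puts us in position to apply the representation already developed. First I would record the complexified versions of the cross-product relations: since $\mathfrak{X}(f_u,A,B) = f_v$ and $\mathfrak{X}(f_v,A,B)=-f_u$ for the positive orientation of $\{A,B,f_u,f_v\}$, the earlier computation gives $\mathfrak{X}(f_w,A,B) = i f_w$, and this is precisely what forces $[f_w]$ to be a null direction orthogonal to both $A$ and $B$.

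Next I would check the extension condition~(1) and the minimality condition~(3). The extension is immediate from the complex-analysis facts recalled at the start of Section~\ref{sec:cauchyp}: since $C$, $A$, $B$ are the unique analytic extensions, we have $C(t,0)=c(t)$, so evaluating the real part of the integral on the real axis returns $c(t)$, giving $f(t,0)=c(t)$. For minimality, because $C'$, $A$ and $B$ are holomorphic, the integrand is holomorphic in $\xi$, hence $f_{w\overline{w}} = \left(f_w\right)_{\overline{w}} = 0$; by the minimality lemma this means $H_f = 0$. These two verifications are routine once the integrand has been identified with $2f_w$.

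The substantive part is condition~(2) together with the claim that $f$ is a \emph{spacelike conformal} immersion with the prescribed causal character. Here I would evaluate $\ll f_w,f_w\gg$ and $\ll f_w,\overline{f_w}\gg$. The first vanishes because $\mathfrak{X}(C'(\xi),A(\xi),B(\xi))$ is, by construction, orthogonal to $C'(\xi)$ and has the same squared length as $C'$ (the cross product of an orthonormal normal pair $\{A,B\}$ with a tangent vector rotates within the tangent plane, preserving the metric), so that $C'$ and $i\mathfrak{X}(C',A,B)$ contribute with opposite signs and cancel, yielding $[f_w]\in Q^2$. To locate $[f_w]$ in $Q^2_{pos}$ (respectively $Q^2_{neg}$) I would compute $\ll f_w,\overline{f_w}\gg$, which reduces to $\tfrac{1}{2}\langle c',c'\rangle$ up to positive factors, so the positivity (respectively negativity) of the curve transfers directly to the sign of $\ll f_w,\overline{f_w}\gg$; conformality of $f$ then follows from $[f_w]\in Q^2_{pos}\cup Q^2_{neg}$ by the definition of spacelike conformal immersion. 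Finally, the normal-bundle identity $N_{c(t)}f(M) = [A(t)+iB(t)]$ follows because, along $I$, the tangent plane is spanned by $f_u = c'$ and $f_v = \mathfrak{X}(c',A,B)$, both of which lie in $\Span\{c',A,B\}^{\perp\perp}$ orthogonal to the pair $\{A,B\}$; invoking the lemma characterizing orthogonality in $Q^2$ identifies the normal line with $[A+iB]$.

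The main obstacle I anticipate is the causal-character bookkeeping in condition~(2): one must confirm that the analytic extensions $A(w)$, $B(w)$ remain the correct orthonormal normal frame off the real axis and that the sign of $\ll f_w,\overline{f_w}\gg$ is genuinely controlled by $\langle c',c'\rangle$ rather than degenerating, i.e. that the good-curve hypothesis guarantees $\mathfrak{X}(C',A,B)$ stays metrically matched to $C'$ so that no metric singularity is introduced near $I$. Everything else is a direct unwinding of the cross-product formula and the Weierstrass representation; the orientation convention in $\mathfrak{X}(f_w,A,B)=if_w$ is the one delicate sign that must be tracked consistently throughout.
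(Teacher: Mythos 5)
Your argument is correct and its skeleton coincides with the paper's: identify the integrand with $2f_w$, obtain $f(t,0)=c(t)$, $f_u(t,0)=c'(t)$ and $f_v(t,0)=\mathfrak{X}(c'(t),A(t),B(t))$ by restricting the analytic extensions to $I$, get $H_f=0$ from holomorphy of the integrand, and place $[f_w]$ in $Q^2$ with the correct sign of $\ll f_w,\overline{f_w}\gg$ because $d'=\mathfrak{X}(c',A,B)$ is orthogonal to $c'$ with the same squared norm. Where you genuinely diverge is condition (2) of Problem~\ref{prob:bjorling}: the paper routes through the normal operator $\tilde{J}$ of Proposition~\ref{jot}, writing $\tilde{J}([A(w)+iB(w)])=[C'(w)-id'(w)]$ and extracting the Weierstrass data $\mu(w)$, $x(w)$, $y(w)$ with $f'(w)=2\mu(w)W(x(w),y(w))$, which makes the clause ``it follows from (\ref{Wei})'' literal and settles the normal bundle via $\tilde{J}\circ\tilde{J}=Id$; you instead argue directly that along $I$ both $f_u=c'$ and $f_v=\mathfrak{X}(c',A,B)$ are orthogonal to $A$ and $B$ (the first by the hypothesis of the problem, the second by the defining property of the cross product), so the orthogonal complement of the tangent plane is exactly $[A(t)+iB(t)]$ by the orthogonality lemma. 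Your route is more elementary and avoids the detour through $\tilde{J}$, at the cost of not exhibiting the Weierstrass data that the paper reuses later in its uniqueness lemma. Two points you should tighten: the identities $\ll d',d'\gg\,=\,\ll C',C'\gg$ and $\ll C',d'\gg\,=0$ are established on $I$ and must be propagated to all of $M$ by the identity theorem (both sides are holomorphic in $w$), so that $\ll f_w,f_w\gg\,=0$ everywhere; and the positivity (resp.\ negativity) of $\ll f_w,\overline{f_w}\gg$ is guaranteed only on a neighborhood of $I$ by continuity, which is exactly why $M$ may need to be shrunk --- you flag this correctly, and it matches the paper's remark on metric singularities. Finally, your notation $\Span\{c',A,B\}^{\perp\perp}$ is garbled; what you mean, and what is true, is simply that $f_u,f_v\in\Span\{A,B\}^{\perp}$ along $I$.
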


Setting $d'(w)=\mathfrak{X}(C'(w),A(w),B(w))$, from
$f_w=\dfrac{1}{2}(C'(w) - id'(w))$, it follows that
$f_w(t,0)=\dfrac{1}{2}(c'(t)+id'(t)$ and, since $c'$ and $d'$ are vector
fields in $\mathbb{R}^4_2$ along the curve, we obtain:
\[\dfrac{\partial f}{\partial
    u}(t,0)=c'(t)\quad\mbox{and}\quad\dfrac{\partial f}{\partial
    v}(t,0)=d'(t).\]

The following result will be useful:

\begin{lemma}
  Suppose that, in the above setting, the elements of the triad
  $(c(t), A(t), B(t))_{t\in I}$ are such that $c$ is a good curve and
  the frame
  $\left\lbrace A(t),B(t)\right\rbrace\subset
  \Span\{N(t),B(t),R(t)\}$. If we set the complex vector field
  \begin{equation}\label{swa}
    d'(w)=\mathfrak{X}(C'(w),A(w),B(w)),
  \end{equation}
  then $\left[ C'(w) - id'(w)\right]\in Q^2_{pos}$.
\end{lemma}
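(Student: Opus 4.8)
I want to show that the complex vector $C'(w) - i\,d'(w)$, with $d'(w) = \mathfrak{X}(C'(w),A(w),B(w))$, defines a point of $Q^2_{pos}$. By definition this means two things: $\ll C'-id',\,C'-id'\gg = 0$ and $\ll C'-id',\,\overline{C'-id'}\gg > 0$. The natural strategy is to verify these identically along the real interval $I$ first, where everything is a genuine vector field in $\mathbb{R}^4_2$ and the inner product is the real form $\lpr{\cdot}{\cdot}$, and then invoke holomorphicity to extend to all of $M$.

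First I would write $z = C' - i d'$ and expand
\[
\ll z,z\gg = \left(\lpr{C'}{C'} - \lpr{d'}{d'}\right) - 2i\,\lpr{C'}{d'}
\]
along $I$, so that the vanishing of $\ll z,z\gg$ is equivalent to the two real conditions $\lpr{C'}{C'} = \lpr{d'}{d'}$ and $\lpr{C'}{d'} = 0$. The orthogonality $\lpr{C'}{d'}=0$ is immediate from the defining property of the cross product, since $\mathfrak{X}(C',A,B)$ is orthogonal to each of its three arguments; indeed $\lpr{\mathfrak{X}(C',A,B)}{C'} = \omega(C',A,B,C') = 0$ because $\omega$ is alternating. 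For the norm equality I would use that $c$ is a \emph{good} curve, so that $\{T,N,B,R\}$ is a positive orthonormal frame and $\{A,B\}$ lies in $\Span\{N,B,R\}$; writing $C' = vT$ along $I$, the vector $d' = \mathfrak{X}(C',A,B)$ must lie in the remaining directions. The quantity $\lpr{d'}{d'}$ is then computed from $\omega(C',A,B,d') = \sum_{i<j<k}(\Delta_{ijk})^2$ together with the normalization $\lpr{A}{A} = \lpr{B}{B}$ dictated by $\{A,B\}$ being an orthonormal set spanning a negative plane (since $[A+iB]\in Q^2_{neg}$); this should reproduce $\lpr{d'}{d'} = \lpr{C'}{C'} = v^2 > 0$.

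For the positivity condition I would expand
\[
\ll z,\overline{z}\gg = \lpr{C'}{C'} + \lpr{d'}{d'} = 2\lpr{C'}{C'} > 0
\]
along $I$, using the two identities already established, and the hypothesis that $c$ is a positive curve gives $\lpr{C'}{C'} = v^2 > 0$. The final step is the analytic-continuation argument: $C'$, $A$, $B$ are holomorphic on $M$, the cross product $\mathfrak{X}$ is polynomial (hence holomorphic) in its entries, so both $\ll z,z\gg$ and $\ll z,\overline{z}\gg$ — or rather their holomorphic counterparts — are holomorphic functions of $w$ that vanish (resp. stay positive) on the interval $I$, which has an accumulation point in $M$; by the identity theorem recalled at the start of Section~\ref{sec:cauchyp}, $\ll z,z\gg \equiv 0$ on all of $M$, and by continuity $\ll z,\overline{z}\gg$ remains positive on a neighborhood of $I$, shrinking $M$ if necessary as the remark after Lemma~\ref{idn} permits.

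**Main obstacle.** I expect the delicate point to be the norm equality $\lpr{d'}{d'} = \lpr{C'}{C'}$. The orthogonality and the real-part positivity are essentially formal, but pinning down $\lpr{\mathfrak{X}(C',A,B)}{\mathfrak{X}(C',A,B)}$ requires care because the metric is indefinite: the Gram determinant of $\{C',A,B\}$ in $\mathbb{R}^4_2$ carries signs, and one must track that $\{A,B\}$ spans a \emph{negative} plane while $C'$ is \emph{positive}, so that the three-fold product lands in the correct causal type. The good-curve hypothesis is exactly what guarantees the orthogonal complement of $c'(t)$ is isometric to $\mathbb{R}^3_1$, allowing $\{A,B\}$ together with $d'$ to form an orthogonal frame of that complement with the signature arithmetic working out to $\lpr{d'}{d'} = v^2$; making that bookkeeping precise is the real content of the proof.
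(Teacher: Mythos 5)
Your proposal is correct in outline and takes a genuinely different, more self-contained route than the paper. The paper's own proof is two lines: since $\mathfrak{X}$ is $3$-linear, $\mathfrak{X}(C'(w),A(w),B(w))$ is the unique holomorphic extension of the real field $d'(t)=\mathfrak{X}(c'(t),A(t),B(t))$, so (\ref{swa}) holds, and the quadric conditions are then obtained by citing Lemma~\ref{idn}. That citation is loose: Lemma~\ref{idn} concerns vectors of the form $z+iL(z)$ for the fixed operator $L$ of (\ref{trans}), not $C'-i\mathfrak{X}(C',A,B)$; what makes it morally applicable is that along the curve the operator $x\mapsto\mathfrak{X}(x,A(t),B(t))$ behaves on the orthogonal complement of $\Span\{A,B\}$ exactly like $L$ (skew-adjoint, norm-preserving, square $-I$, cf.\ the identities $\mathfrak{X}(f_u,A,B)=f_v$, $\mathfrak{X}(f_v,A,B)=-f_u$). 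Your argument --- verify $\ll z,z\gg\,=0$ and $\ll z,\overline{z}\gg\,>0$ pointwise on $I$ directly from properties of $\mathfrak{X}$, then propagate the first identity to all of $M$ by the identity theorem recalled at the start of Section~\ref{sec:cauchyp}, and the second to a neighborhood of $I$ by continuity --- is precisely the rigorous version of this, and it has the added virtue of making explicit that positivity is only guaranteed after possibly shrinking $M$, a point the paper's proof passes over.

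The one incomplete step is the one you flag yourself: the norm equality $\lpr{d'}{d'}=\lpr{c'}{c'}$ along $I$. It is true and closes in two lines, so this is not a fatal gap, but it should be written out rather than deferred. Orthogonality of $d'$ to $c'$, $A$, $B$ (alternating property of $\omega$), together with the paper's formula $\omega(x,y,z,\mathfrak{X}(x,y,z))=\sum_{i<j<k}(\Delta_{ijk})^2$ and the linear independence of $\{c',A,B\}$, gives $\lpr{d'}{d'}=\omega(c',A,B,d')=\sum_{i<j<k}(\Delta_{ijk})^2>0$. Next, since the metric of $\semi$ has determinant $+1$ in an orthonormal basis (index $2$, not $1$ --- exactly the signature bookkeeping you were worried about), for any four vectors the square of the volume form equals the Gram determinant; as $\{c',A,B,d'\}$ is mutually orthogonal, this reads
\[
\lpr{d'}{d'}^2=\omega(c',A,B,d')^2=\det\bigl(\mathrm{Gram}(c',A,B,d')\bigr)
=\lpr{c'}{c'}\,\lpr{A}{A}\,\lpr{B}{B}\,\lpr{d'}{d'},
\]
and $\lpr{A}{A}=\lpr{B}{B}=-1$ is forced by orthonormality together with $[A+iB]\in Q^2_{neg}$. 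Dividing by $\lpr{d'}{d'}>0$ yields $\lpr{d'}{d'}=\lpr{c'}{c'}=v^2$, hence $\ll z,z\gg\,=0$ and $\ll z,\overline{z}\gg\,=2v^2>0$ on $I$, and the rest of your continuation argument goes through unchanged.
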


\begin{proof}
  Since $d'(w)$ is the analytic extension of the real vector field
  $d'(t)=\mathfrak{X}(c'(t),A(t),B(t))$ and the cross product is a
  3-linear operator, let $\mathfrak{X}(C'(w),A(w),B(w))$ be its
  extension to $\mathbb{C}^4$, that restricted $(t,0)$ give us
  $d'(t)$. Hence the equation (\ref{swa}) holds. By Lemma~\ref{idn} we
  also have $\left[ C'(w)-id'(w)\right] \subset Q^2_{pos}$.
\end{proof}

\begin{lemma}[Existence]
  Given a triad $\left( c(t),A(t),B(t)\right)_{t\in I}$ on the interval
  $I=\left(-r,r\right)$, satisfying the conditions in Problem
  \ref{prob:bjorling} such that $\left[ A(t)+iB(t)\right]\in Q^2_{neg}$
  ($\left[ A(t)+iB(t)\right]\in Q^2_{pos}$, if the curve in negative)
  Then the map $f:M\to\semi$, defined on a certain complex domain $M$
  containing $I$, given by
  \begin{equation}\label{exis}
    f(w) = c(0) + 
    \mathfrak{Re} \int_{0}^{w} \left(C'(\xi) - i \mathfrak{X}(C'(\xi),A(\xi),B(\xi))\right)d\xi, 
  \end{equation}
  is a conformal immersion such that $H_f =0$
  and
  \[f(t,0)=c(t)\quad\mbox{and}\quad\dfrac{\partial f}{\partial
      w}(t,0)=c'(t) - i\mathfrak{X}(c'(t),A(t),B(t)).\]
\end{lemma}

\begin{proof}
  Taking $d'(w)$ given by (\ref{swa}) and using the operator given in
  Proposition~\ref{jot}, we have that
  \[\tilde{J}(\left[ A(w)+iB(w)\right])=\left[ C'(w)-id'(w)\right]=\left[
      z\right]\] and the holomorphic functions from $M$ into
  $\mathbb{C}$ given by (\ref{Wei}) in Theorem~\ref{inte}: $\mu(w)$,
  $x(w)$ and $y(w)$. These functions define the unique holomorphic
  extension
  \begin{align*}
    f'(w)
    &=C'(w) -id'(w)\\
    &=C'(w)-i\mathfrak{X}(C'(w),A(w),B(w))\\
    &=2\mu(w)W(x(w),y(w)).
  \end{align*}
  Hence, (\ref{exis}) is a solution of our problem.
\end{proof}

\begin{lemma}[Uniqueness]
  Assume that $(M',g(z))$ and $(M,f(w))$ are two solutions of Problem
  \ref{prob:bjorling} such that $c(I) \subset f(M) \cap g(M')$, and
  $[g_{w}(w(t))] = [f_{z}(z(s))]$ for all $(s,t) \in I \times I'$ also
  satisfying $g(w(t)) = f(z(s)) = P(t) \in C(I)$. Then, the subset
  $f(M) \cap g(M') \supset c(I)$ is an open subset of the both surfaces
  $f(M)$ and $g(M')$.
\end{lemma}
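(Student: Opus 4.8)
The plan is to reduce the uniqueness assertion to the \emph{rigidity} of the Weierstrass--Schwarz representation under the identity theorem for holomorphic functions. Both $f$ and $g$ solve Problem~\ref{prob:bjorling}, so by the existence construction each is given by a Schwarz integral of the form~(\ref{exis}), with holomorphic integrands $f_w$ and $g_z$ that, through~(\ref{Wei}) in Proposition~\ref{inte}, take the shape $2\mu W(x,y)$. The essential point is that these holomorphic data are entirely determined by the triad $(c(t),A(t),B(t))_{t\in I}$, which is shared by the two solutions along $c(I)$.

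First I would match the parametrizations along the curve. By hypothesis, at every pair of parameters with $g(z(s)) = f(w(t)) = P(t)\in c(I)$ the tangent planes coincide as points of $Q^2$, i.e.\ $[g_z] = [f_w]$ there. Composing $g$ with the real-analytic change of parameter that identifies the two parameter intervals along the common curve (and extends holomorphically by fact~(2) of this section), I may assume that $f$ and $g$ share $I$ as their real axis with $f(t,0) = g(t,0) = c(t)$. Conformality together with the extension condition then fixes the scale: along $I$ both integrands reduce to the boundary value $c'(t) - i\,\mathfrak{X}(c'(t),A(t),B(t))$ recorded in the Existence Lemma, so the projective identity $[g_z]=[f_w]$ is upgraded to the genuine equality $f_w(t,0) = g_z(t,0)$ for every $t\in I$.

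Next I would invoke the identity theorem. The integrands $f_w$ and $g_z$ are holomorphic, and they coincide on $I$, which has accumulation points in $\complex$; by fact~(1) recalled at the beginning of this section they must coincide on the connected component of $M\cap M'$ containing $I$. Integrating by means of~(\ref{Wei}), and using the common base value $f(c(0)) = g(c(0)) = c(0)$, the two immersions agree on a full complex neighbourhood of $I$.

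The principal obstacle, and the reason the conclusion is stated as openness rather than as global equality, is that $M$ and $M'$ need not coincide, so the identity theorem only propagates agreement along their overlap. To conclude I would argue locally: given $P = c(t_0)\in c(I)$, the previous step furnishes an open set of parameters on which $f$ and $g$ coincide; since both are conformal immersions, hence local embeddings sharing the same tangent plane at $P$, their images cover an open neighbourhood of $P$ in each surface. Thus every point of $c(I)$ admits an open neighbourhood contained in $f(M)\cap g(M')$ and open in both $f(M)$ and $g(M')$, which is precisely the claim.
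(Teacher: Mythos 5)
Your proposal is correct and takes essentially the same route as the paper's proof: reparametrize one solution by the holomorphically extended change of parameter, upgrade the projective identity $[g_{w}]=[f_{z}]$ to genuine equality of the complex derivatives along $I$, propagate that equality by uniqueness of holomorphic extensions, and integrate to obtain agreement of the two immersions on a neighbourhood of $c(I)$, whence openness of the intersection. The only cosmetic difference is in how the scale is fixed along the curve: you identify both integrands with the explicit boundary value $c'(t)-i\,\mathfrak{X}(c'(t),A(t),B(t))$ coming from the Björling data, while the paper compares the scalar factors $\mu(t)=\beta(t)$ multiplying the common direction $W(x(t),y(t))$.
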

\begin{proof}
  Define the real valued function $s = \rho(t)$ such that
  $w(t) = z(\rho(t))$, we have that $g(w(t)) = f(z(\rho(t)))$. Since $g$
  and $f$ are the real parts of holomorphic fields in $\complex^{4}$,
  the function $\rho$ is a real analytic function. Let $\eta(w)$ be a
  local holomorphic extension of the function $\rho(t)$, that is
  $\eta(t,0) = \rho(t)$.

  Setting $h(w) = f(z(w))$, we have that $h(U) \subset f(M)$ and
  $h_{w}(w) = z'(w)f_{z}(z(w))$. Since, by hypothesis,
  \[[g_{w}(w(t))] = [f_{z}(z(\rho(t)))] = [h_{w}(z(\rho(t)))]
    =[W(x(t),y(t)],\]along the curve $c(I)$ we have:
  \begin{align*}
    g_{w}(t,0) &= \mu(t) W(x(t),y(t))\\
               &= \chi(t)z'(\rho(t)) W(x(t),y(t)) = \beta(t) W(x(t),y(t))\\
               &= h_{w}(t,0). 
  \end{align*}

  Therefore, $\mu(t) = \chi(t)z'(\rho(t)) = \beta(t)$ and then we
  obtain, by unicity of the holomorphic extensions,
  \begin{align*}
    h(w) &= P + 2 \Re \int_{w_{0}}^{w} \beta(\xi) W(x(\xi),y(\xi)) d
           \xi\\
         &= P + 2 \Re \int_{w_{0}}^{w} \mu(\xi) W(x(\xi),y(\xi)) d \xi\\
         &= g(w)
  \end{align*}
  in a neighborhood of $c(I)$.
\end{proof}

Summarizing, we have the:

\begin{theor}
  Leti $I=(-r,r)$ be an open interval and assume given the analytic
  triad $(c(t),A(t),B(t))_{t\in I}$, composed by a positive
  (respec. negative) regular curve and a family of orthonormal sets such
  that $\Span\{A,B\} \subset \Span\{N,B,R\}$ and $[A + iB] \in Q^{2}$ is
  a negative (resepc. positive) distribution along this curve.

  The integral Schwarz formula
  \begin{equation*}
    f(w) = c(0) + 
    \Re \int_{w_{0}}^{w} \left(c_{w}(\xi) - i
      \mathfrak{X}(c_{w}(\xi),A(\xi),B(\xi)) \frac{}{}\right) d \xi,
  \end{equation*}
  where the complex triad $(c_{w}(w),A(w),B(w))$ is the (unique)
  holomorphic extension of the triad $(c'(t),A(t),B(t))$, is a maximal
  solution of the Problem~\ref{prob:bjorling}, without metric
  singularities.
\end{theor}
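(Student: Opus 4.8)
The plan is to assemble the three preceding lemmas and then upgrade their local content to a maximal, singularity-free statement. First I would invoke the Existence Lemma: applied to the holomorphically extended triad $(c_w(w),A(w),B(w))$ it already guarantees that $f$ is a conformal immersion with $H_f=0$, that $f(t,0)=c(t)$, and that $f_w(t,0)=c'(t)-i\mathfrak{X}(c'(t),A(t),B(t))$. In the positive case, the Lemma that (via Lemma~\ref{idn} and the hypothesis $\Span\{A,B\}\subset\Span\{N,B,R\}$) establishes $[C'(w)-id'(w)]\in Q^2_{pos}$ shows that $f$ is a positive spacelike immersion; the negative case is symmetric. This disposes of conditions (1) and (3) of Problem~\ref{prob:bjorling}.

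Next I would verify the normal-bundle condition (2). Writing $d'(w)=\mathfrak{X}(C'(w),A(w),B(w))$ as in~(\ref{swa}), the Existence Lemma records that $\tilde{J}([A(w)+iB(w)])=[C'(w)-id'(w)]=[f_w]$. Since by Proposition~\ref{jot} the operator $\tilde{J}$ carries each plane of $Q^2$ to its orthogonal complement and satisfies $\tilde{J}\circ\tilde{J}=Id$, the normal plane of $f$ at a point of the curve is $\tilde{J}([f_w])=\tilde{J}(\tilde{J}([A+iB]))=[A+iB]$. Hence $N_{c(t)}f(M)=[A(t)+iB(t)]$ for every $t\in I$, which is exactly condition (2).

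It then remains to produce a maximal domain and to settle uniqueness. The metric singularities are precisely the points where $[f_w]\in Q^2_{null}$, i.e.\ where $\ll f_w,\overline{f_w}\gg=0$; accordingly I would take $M$ to be the connected component containing $I$ of the open set on which the holomorphic extension of $f_w$ exists and avoids $Q^2_{null}$, and then pass to a simply connected such domain so that the integral in the Schwarz formula is path independent. On this $M$ the immersion is free of metric singularities by construction, and it is maximal because any enlargement would either leave the domain of holomorphy or cross the singular locus. Finally, the Uniqueness Lemma shows that any other solution agrees with $f$ on an open neighborhood of $c(I)$; since both are real parts of holomorphic fields, the identity theorem propagates this agreement across the whole common domain, so $f$ is the unique maximal solution.

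The hard part will be the maximality bookkeeping rather than any single computation: one must check that the singular set $\{[f_w]\in Q^2_{null}\}$ is closed, so that its complement is open, verify that restricting to the component of $I$ preserves simple connectedness, and reconcile the purely local conclusion of the Uniqueness Lemma (coincidence near $c(I)$) with the global claim by invoking analytic continuation. All the causal-character and normal-bundle content is already carried by Lemma~\ref{idn} and Proposition~\ref{jot}, so no new algebra is needed.
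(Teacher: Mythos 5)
Your proposal is correct and takes essentially the same route as the paper: the theorem there carries no separate proof (it is introduced by ``Summarizing, we have the:'') and is intended precisely as the assembly of the Schwarz integral-equation proposition, the causal-character lemma, the Existence Lemma, and the Uniqueness Lemma that you invoke. Your additional care with the normal-bundle condition via $\tilde{J}\circ\tilde{J}=Id$ and with maximality and analytic continuation only fills in details the paper leaves implicit.
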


\section{Constructions and examples} 

\begin{example}
  As our first example consider the regular curve
  \[c(t) = \left( t,0,t^{2}/2,0\right),\quad t\in I=(-1,1).\] This is a
  negative curve with $c'(t)=(1,0,t,0)$. The two remaining elements of
  the triad are
  \[A(t) = \frac{1}{\sqrt{1 - t^{2}}}(t,0,1,0)\quad\mbox{and}\quad B(t)
    = -\frac{1}{\sqrt{1 - t^{2}}}(0,t,0,1),\,t\in I.\] A direct
  computation gives $d'(t)=\mathfrak{X}(c'(t),A(t),B(t))=(0,1,0,t)$. The
  holomorphic extension of $d'(t)$
  is \[d'(w)=\mathfrak{X}(c'(w),A(w),B(w))=\left( 0,1,0,w\right).\]
  Hence,
  \begin{align*}
    f(w)&=\mathfrak{Re}\int_{0}^{w} ((1,0,\xi,0) - i(0,1,0,\xi))d\xi\\
        &=\mathfrak{Re}\left( \left(w,0,w^{2}/2,0\right) - i\left(
          0,w,0,w^{2}/2\right) \right).
  \end{align*}
  Therefore, the solution of the Bj\"orling problem for the given data
  is
  \[f(u,v) = (u,v,(u^{2}- v^{2})/2,uv),\mbox{ with $w\in \mathbb{C}$ and
      $\vert w\vert< 1$},\] which is the surface of example~\ref{exa}.
\end{example}

\subsection{Periodic curves in the Lorentzian Torus of the null sphere}
  
Let $T_{1}^{2}$ be the Torus in the null sphere $S^3_2(0)\subset\semi$,
parametrically given by \[T(u,v) = (\cos u, \sin u, \cos v, \sin v),\]
for $(u,v) \in \real^{2}$. The induced metric on this surface is
$ds^{2}(T_{1}^{2}) = - du^{2} + dv^{2}$, so it is a compact Lorentz
surface in $\semi$ isometric to the Lorentz plane $\real_{1}^{2}$.

Now, for a real number $k > 0$, we take the parametric curve
$c(t) = T(t,kt)$. For this curve, $\lpr{c'}{c'} = -1 + k^{2}$ and
$\lpr{c''}{c''} = -1 + k^{4}$.  Therefore, if $k > 1$ we obtain a
positive parametric curve.

\begin{claim}\label{claim:cycles}
  For any integer $n\geq 2$, we have that
  \[c_{n}(t) = T(t,nt) = (\cos t, \sin t, \cos nt, \sin nt) = a(t) +
    b(nt),\] where $a(u) = T(u,0)$ and $b(v) = T(0,v)$, is a periodic
  positive good curve.
\end{claim}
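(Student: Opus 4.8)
The plan is to verify the three claimed properties of $c_n(t) = T(t, nt)$ in order: periodicity, positivity, and goodness. For \emph{periodicity}, I would observe that $c_n(t) = (\cos t, \sin t, \cos nt, \sin nt)$ has every coordinate periodic with period dividing $2\pi$; indeed $\cos t, \sin t$ have period $2\pi$ and $\cos nt, \sin nt$ have period $2\pi/n$, so the whole curve is $2\pi$-periodic. The decomposition $c_n(t) = a(t) + b(nt)$ with $a(u) = (\cos u, \sin u, 0, 0)$ and $b(v) = (0,0,\cos v, \sin v)$ is immediate from the definition of $T$, and it records that the curve splits as the sum of a loop in $\pi_{12}$ and a loop in $\pi_{34}$.

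For \emph{positivity}, I would compute $c_n'(t) = (-\sin t, \cos t, -n\sin nt, n\cos nt)$ and apply the metric (\ref{eq:semiorid}): $\langle c_n', c_n'\rangle = -\sin^2 t - \cos^2 t + n^2\sin^2 nt + n^2\cos^2 nt = -1 + n^2$. Since $n \geq 2$ this is $\geq 3 > 0$, so the curve is positive (this is the $k = n$ case of the preliminary computation $\langle c', c'\rangle = -1 + k^2$ already recorded in the text).

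The main work is showing $c_n$ is a \emph{good curve} in the sense of the definition, i.e.\ exhibiting an orthonormal positive frame $\{T, N, B, R\}$ with the required span conditions. Here I would first reparametrize by arc length (or simply note $|c_n'| = \sqrt{n^2 - 1}$ is constant, so $t$ is proportional to arclength and the frame computations are uniform in $t$) and set $T = c_n'/|c_n'|$. The candidate for $N$ comes from $c_n''(t) = (-\cos t, -\sin t, -n^2\cos nt, -n^2\sin nt)$; I would compute $\langle c_n'', c_n''\rangle = -1 + n^4$, which is nonzero for $n \geq 2$, so after subtracting its $T$-component and normalizing we get a well-defined unit vector $N \in \Span\{c_n', c_n''\}$ with $|\langle N, N\rangle| = 1$. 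The key point is that $\langle c_n', c_n''\rangle = 0$ (differentiate $\langle c_n', c_n'\rangle = n^2 - 1 = \text{const}$), so $N$ is simply $c_n''/|c_n''|$ up to sign. Next, $c_n'''(t) = (\sin t, -\cos t, n^3\sin nt, -n^3\cos nt)$ supplies $B$ after Gram--Schmidt against $\{T, N\}$; I would check that the component of $c_n'''$ orthogonal to $\Span\{c_n', c_n''\}$ is spacelike-or-timelike of nonzero length so that $B$ can be normalized to $|\langle B, B\rangle| = 1$. Finally $R = \mathfrak{X}(T, N, B)$ completes the frame, and the orthonormality and positivity of $\{T, N, B, R\}$ follow from the properties of the cross product $\mathfrak{X}$ established earlier, together with the sign/causal bookkeeping that the ambient index-$2$ metric forces on the four frame vectors.

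The step I expect to be the genuine obstacle is the \emph{causal-character bookkeeping} in the goodness verification: I must confirm that the successive orthogonal projections of $c_n', c_n'', c_n'''$ are each nondegenerate (nonzero norm) so that normalization is legitimate, and that the resulting frame is a \emph{positive} orthonormal basis of $\real_2^4$ rather than one with the wrong index. Because the curve lives on the null sphere $S_2^3(0)$ and mixes the negative plane $\pi_{12}$ with the positive plane $\pi_{34}$, the induced signs on $N$ and $B$ are not a priori of the form one would guess from the Euclidean intuition; tracking which of $N, B, R$ are spacelike and which are timelike (so that exactly two are negative, matching index $2$) is where care is needed. For general real $k > 1$ these quantities ($-1 + k^2$, $-1 + k^4$, and the higher derivatives' norms) can be arranged to give a good frame too, but the integer hypothesis $n \geq 2$ is what guarantees periodicity simultaneously, which is why the claim is phrased for integers.
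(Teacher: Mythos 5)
Your proposal is correct and takes essentially the same route as the paper: periodicity from integrality of $n$, positivity from $\lpr{c_n'}{c_n'}=n^2-1>0$, and goodness by assembling the frame $\{T,N,B,R\}$ from $c_n'$, $c_n''$, $c_n'''$ and a cross product. The checks you defer do close, exactly as in the paper's computation: writing $c_n'=x(t)+n\,y(nt)$, $c_n''=-a(t)-n^2b(nt)$, $c_n'''=-x(t)-n^3y(nt)$ with $x=a'$, $y=b'$, one gets $\lpr{c_n'}{c_n''}=0$, your Gram--Schmidt vector for $B$ comes out proportional to $n\,x(t)+y(nt)$ (norm squared $1-n^2\neq 0$, so normalizable), the remaining orthogonal direction is $V(t)=n^2a(t)+b(nt)$ (the paper's third normal), and the causal bookkeeping you flag as the obstacle is automatic, since any orthonormal basis of $\semi$ contains exactly two negative vectors --- here $B$ and $R$, while $T$ and $N$ are positive.
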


Now, we take the parametric surface given by
\[X_{n}(r,\theta) = \left(r \cos \theta, r \sin \theta, r^{n} \cos n
    \theta, r^{n} \sin n \theta\right).\] Its induced metric is
\[ds^{2} = (-1 + n^{2} r^{2(n-1)}) dr^{2} + r^{2}(-1 + n^{2} r^{2(n-1)})
  d\theta^{2}.\] The parametric surface $X_{n}(r,\theta)$ is, for
$r > 1/n$ and $n > 1$, a positive surface for such that
$X(1,t) = c_{n}(t)$.

\begin{claim}
  The periodic positive parametric curve $c_{n}(t)$ traces the cycle of
  $T_{1}^{2}$: $c([0,2\pi]) = T_{1}^{2} \cap S$, where $S \subset \semi$
  is the graph of the holomorphic function $\phi(z) = z^{n}$, for
  $\vert z \vert > 1/n$.
\end{claim}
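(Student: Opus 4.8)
The plan is to pass to the complex coordinates $(x^{1},x^{2},x^{3},x^{4})\mapsto(z,\zeta)$ with $z=x^{1}+ix^{2}$ and $\zeta=x^{3}+ix^{4}$, which identifies $\semi$ with $\complex^{2}$ and makes both objects transparent. Under this dictionary the Lorentzian torus becomes $T_{1}^{2}=\{\,|z|=1,\ |\zeta|=1\,\}$, a product of two unit circles, while the surface $S$, being the graph of $\phi(z)=z^{n}$, becomes $\{(z,z^{n})\colon |z|>1/n\}$; here the parametrization $X_{n}(r,\theta)$ corresponds to $z=re^{i\theta}$, $\zeta=z^{n}=r^{n}e^{in\theta}$. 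The claim then reduces to computing the intersection of these two sets.

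First I would verify the inclusion $c([0,2\pi])\subseteq T_{1}^{2}\cap S$. By definition $c_{n}(t)=T(t,nt)$, so every $c_{n}(t)$ lies on the torus. Moreover $c_{n}(t)=X_{n}(1,t)$, since its first pair of coordinates is $(\cos t,\sin t)=e^{it}$ and its second pair is $(\cos nt,\sin nt)=(e^{it})^{n}$; as the radius $r=1$ satisfies $1>1/n$ for every $n\geq 2$, this point lies on $S$.

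For the reverse inclusion I would take $p\in T_{1}^{2}\cap S$ and pin down its radius. Writing $p=X_{n}(r,\theta)$ with $r>1/n$, the projection $\hat{p}$ onto $\pi_{12}$ has squared Euclidean length $r^{2}$; but $p\in T_{1}^{2}$ forces $\hat{p}$ onto the unit circle, so $r^{2}=1$ and hence $r=1$. Substituting $r=1$ gives $p=X_{n}(1,\theta)=c_{n}(\theta)$, so $p\in c([0,2\pi])$ after reducing $\theta$ modulo $2\pi$. Combining the two inclusions yields the set equality, and the identity $c_{n}(\theta)=X_{n}(1,\theta)$ exhibits the curve as the parametrized cycle.

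The argument is a direct comparison of the two parametrizations in complex coordinates, so I expect no serious obstacle. The only point demanding care is the radius computation that confines the intersection to $r=1$: one should check both that $S$ meets the torus nowhere else and that the admissible range $r>1/n$ actually contains this value. A clean way to see the first is to note that $\lpr{X_{n}}{X_{n}}=r^{2}(r^{2(n-1)}-1)$ vanishes exactly at $r=1$ for $n>1$, while $T_{1}^{2}\subset S_{2}^{3}(0)$, so the two surfaces can only meet where $X_{n}$ is null, namely on the circle $r=1$.
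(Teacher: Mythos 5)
Your proof is correct. The paper itself gives no argument for this claim: it is stated bare, immediately after the observation that $X_{n}(1,t)=c_{n}(t)$, which only yields the easy inclusion $c([0,2\pi])\subseteq T_{1}^{2}\cap S$. Your write-up supplies exactly the missing content, the reverse inclusion, and does so twice over: directly, since a point of $T_{1}^{2}$ has its $\pi_{12}$-projection of unit Euclidean length, which in the graph parametrization forces $r=1$ (admissible because $1>1/n$); and causally, since $T_{1}^{2}\subset S_{2}^{3}(0)$ while $\lpr{X_{n}}{X_{n}}=r^{2}\bigl(r^{2(n-1)}-1\bigr)$ vanishes on $r>1/n$ only at $r=1$, so the spacelike part of $S$ cannot meet the torus off that circle. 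Both arguments are valid, and the second is a nice geometric refinement the paper does not record: the intersection is precisely the null locus of the graph surface.
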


With the following unit and mutually orthogonal curves
\begin{align*}
  a(t) &= (\cos t,\sin t,0,0),\ x(t) = a'(t),\\
  b(t) &= (0,0,\cos t,\sin t)\mbox{ and } y(t) = b'(t)
\end{align*}
we have
\begin{align*}
  c'(t) &= x(t) + n y(nt),\quad c''(t) = - a(t) - n^{2}b(nt)\\
        &\mbox{ and } c'''(t) = -x(t) - n^{3} y(nt).
\end{align*}
Hence, the negative vector field $V(t) = n^{2} a(t) + b(nt)$ is
orthogonal to $c'$, $c''$ and $c'''$. Considering
$V'(t) = n^{2} x(t) + ny(nt)$, we obtain a negative plane
$\Span\{V'(t),V(t)\}$, that is orthogonal to the plane
$[c'(t),c''(t)])$.

By construction $V = \alpha \mathfrak{X}(c',c'',c''')$, for a some
$\alpha \in \real$. Therefore the Frenet frame for the curve $c(t)$ is
\begin{align*}
  T(t) = &c'(t),\quad N_{1}(t) = \eta c''(t),\quad N_{3}(t) = \xi V(t)\\
         &\mbox{and } N_{2}(t) = \mathfrak{X}(T(t),N_3(t),N_1(t)), 
\end{align*}
for some real numbers $\eta$ and $\xi$.

\begin{claim}
  The surface $S=graph(z^n)$ has
  \[X(r,\theta) = (r \cos \theta, r \sin \theta, r^{n} \cos n \theta,
    r^{n} \sin n \theta)\] as a parametrization, hence its normal plane
  at each point is given by $[A(r,\theta) + i B(r,\theta)]$, where
  \begin{align*}
    A(r,\theta) &= \frac{1}{\sqrt{-1 + n^{2} r^{2n-2}}} (n r^{n - 1}
                  \cos n \theta, n r^{n - 1} \sin n \theta, \cos \theta,
                  \sin \theta)\\
    B(r,\theta) &= \frac{1}{r \sqrt{-1 + n^{2} r^{2n-2}}} (-n r^{n} \sin
                  n \theta, n r^{n} \cos n \theta,-r \sin \theta,r \cos
                  \theta).
  \end{align*}
  Therefore, along the curve $r = 1$ and $\theta = t$ we obtain:
  \[A(t) = \frac{1}{\sqrt{n^{2} - 1}}(n a(nt) +
    b(t))\quad\mbox{and}\quad B(t) = \frac{1}{\sqrt{n^{2} - 1}}(n x(nt)
    + y(t)).\]

  For the triad $(c(t) = X(1,t), A(t), B(t))$ we obtain
  \begin{align*}
    c'(t) &= X_{\theta}(1,t) = x(t) + n y(nt)\quad\mbox{and}\\
    d'(t) &= X_{r}(1,t) = \mathfrak{X}(c',A,B) = -a(t) - n b(nt).
  \end{align*}
\end{claim}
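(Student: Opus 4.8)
The plan is to read $X(r,\theta)$ as the graph map evaluated in polar coordinates $w = r e^{i\theta}$: since $w^{n} = r^{n} e^{i n\theta}$, the point $(\mathfrak{Re}\,w, \mathfrak{Im}\,w, \mathfrak{Re}\,w^{n}, \mathfrak{Im}\,w^{n})$ is exactly $X(r,\theta)$, so the first assertion is immediate. First I would record the coordinate frame
\[
  X_{r} = (\cos\theta, \sin\theta, n r^{n-1}\cos n\theta, n r^{n-1}\sin n\theta), \qquad
  X_{\theta} = (-r\sin\theta, r\cos\theta, -n r^{n}\sin n\theta, n r^{n}\cos n\theta),
\]
and compute $\lpr{X_{r}}{X_{r}} = -1 + n^{2} r^{2n-2}$, $\lpr{X_{\theta}}{X_{\theta}} = r^{2}(-1 + n^{2} r^{2n-2})$ and $\lpr{X_{r}}{X_{\theta}} = 0$. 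This shows at once that the parametrization is orthogonal, that $X$ is conformal after the substitution $r = e^{s}$, and that $S$ is a positive spacelike surface precisely where $n^{2} r^{2n-2} > 1$ — the regime in which $c_{n}$ is positive.

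Next I would determine the normal plane intrinsically, rather than by guessing it: a vector $N = (a,b,c,d)$ is normal if and only if $\lpr{N}{X_{r}} = \lpr{N}{X_{\theta}} = 0$. Writing these two equations as a linear system for $(a,b)$, the coefficient matrix on the left is the planar rotation $R(\theta)$ and the one on the right is $n r^{n-1} R(n\theta)$ acting on $(c,d)$, so that $(a,b) = n r^{n-1} R((n-1)\theta)(c,d)$; feeding in $(c,d) = (1,0)$ and $(0,1)$ produces an explicit orthogonal basis of the normal plane, each vector of squared norm $1 - n^{2} r^{2n-2} < 0$. Dividing by $\sqrt{n^{2} r^{2n-2} - 1}$ gives a $(-1)$-orthonormal frame, and then $\ll A + iB, A + iB \gg \, = 0$ together with $\ll A + iB, \overline{A+iB} \gg \, < 0$ certifies $[A + iB] \in Q_{neg}^{2}$, the causal type dictated by the positive curve $c$.

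With the normal plane in hand, the remaining assertions are verifications. I would restrict to $r = 1$, $\theta = t$ and use the addition formulas for $\cos n\theta$ and $\sin n\theta$ to rewrite the normalized frame in terms of $a(t), b(t), x(t), y(t)$, thereby recovering the displayed $A(t), B(t)$. Then $c'(t) = X_{\theta}(1,t) = x(t) + n y(nt)$ follows by direct substitution, and the identity $d'(t) = X_{r}(1,t)$ is obtained by evaluating $\mathfrak{X}(c', A, B)$ through the determinant formula $\mathfrak{X} = (\Delta_{234}, -\Delta_{134}, -\Delta_{124}, \Delta_{123})$ and matching the result against $X_{r}(1,t) = a(t) + n\,b(nt)$.

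The step I expect to be the true obstacle is the verification that the \emph{displayed} $A(r,\theta), B(r,\theta)$ actually span this normal plane. Orthogonality to $X_{r}$ is a one-line computation, but orthogonality to $X_{\theta}$ (and hence to the full tangent plane, in particular to $c' = X_{\theta}$) is what must be checked with real care, because the displayed frame mixes the angle $n\theta$ coming from the parametrization with the angle $(n-1)\theta$ that the intrinsic computation above forces into the first two slots. Reconciling these multiple-angle expressions through the subtraction formulas — and likewise confirming the cross-product identity $d' = \mathfrak{X}(c',A,B) = X_{r}$ together with its orientation — is where the delicate bookkeeping lives and where an angle or sign slip is easiest to commit; this is the part of the argument I would write out in full before trusting the closed forms.
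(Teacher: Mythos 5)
Your strategy --- compute the induced metric, determine the normal plane from the linear system $\lpr{N}{X_r}=\lpr{N}{X_\theta}=0$, then specialize to $r=1$, $\theta=t$ and verify the cross-product identity --- is the right one, and your metric computations and the structural conclusion $(a,b)=nr^{n-1}R(-(n-1)\theta)(c,d)$ are correct. But the step you explicitly defer (``the part of the argument I would write out in full before trusting the closed forms'') is precisely where the argument collapses, and not because of delicate bookkeeping: the displayed formulas cannot be reconciled with your derivation, because they are not normal vectors. Your own rotation identity forces a normal vector whose last two entries are $(\cos n\theta,\sin n\theta)$ to have first two entries proportional to $(\cos\theta,\sin\theta)$, so the normal frame is
\begin{align*}
A(r,\theta)&\propto (nr^{n-1}\cos\theta,\ nr^{n-1}\sin\theta,\ \cos n\theta,\ \sin n\theta),\\
B(r,\theta)&\propto (-nr^{n-1}\sin\theta,\ nr^{n-1}\cos\theta,\ -\sin n\theta,\ \cos n\theta),
\end{align*}
i.e.\ with $\theta$ and $n\theta$ in the opposite slots to the statement's display. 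A one-line check confirms that the printed frame is not normal:
\[\lpr{A(r,\theta)}{X_\theta}=\frac{-2nr^{n}\sin\bigl((n-1)\theta\bigr)}{\sqrt{-1+n^{2}r^{2n-2}}},\]
which vanishes only where $\sin((n-1)\theta)=0$. Consequently, along $r=1$ the frame your computation actually produces is $A(t)=\tfrac{1}{\sqrt{n^{2}-1}}\bigl(n\,a(t)+b(nt)\bigr)$ and $B(t)=\tfrac{1}{\sqrt{n^{2}-1}}\bigl(n\,x(t)+y(nt)\bigr)$; the printed $A(t)=\tfrac{1}{\sqrt{n^{2}-1}}\bigl(n\,a(nt)+b(t)\bigr)$ fails even to be orthogonal to $c'(t)$, since $\lpr{A(t)}{c'(t)}\propto\sin((n-1)t)$. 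So your assertion that restricting to $r=1$ ``recovers the displayed $A(t),B(t)$'' is exactly the step that would fail.

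There is a second failure in the part you postpone: the sign of the cross product. You plan to match $\mathfrak{X}(c',A,B)$ against $X_r(1,t)=a(t)+n\,b(nt)$. Evaluate the coordinate formula at $t=0$, where the printed and corrected frames coincide (so the computation is unambiguous): with $c'(0)=(0,1,0,n)$, $\sqrt{n^{2}-1}\,A(0)=(n,0,1,0)$, $\sqrt{n^{2}-1}\,B(0)=(0,n,0,1)$ one gets $\Delta_{234}=1-n^{2}$, $\Delta_{134}=0$, $\Delta_{124}=n(n^{2}-1)$, $\Delta_{123}=0$, hence
\[\mathfrak{X}\bigl(c'(0),A(0),B(0)\bigr)=-(e_{1}+n\,e_{3})=-X_r(1,0).\]
So the identity holds only with a minus sign, $\mathfrak{X}(c',A,B)=-\bigl(a(t)+n\,b(nt)\bigr)=-X_r(1,t)$, and your intended matching would fail by that sign (relatedly, the claim as printed is internally inconsistent, asserting both $d'=X_r(1,t)$ and $d'=-a(t)-n\,b(nt)$). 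Note finally that checking only at $t=0$ --- which is all the paper itself does in the verification following this claim --- cannot detect the angle-swap error, precisely because $\sin((n-1)t)$ vanishes there; this is why deferring the full verification is not a safe move for this statement.
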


\begin{example}
  For the triad $(c(t),A(t),B(t))$, where
  \begin{align*}
    c(t) &= a(t) + b(nt),\\
    A(t) &=\frac{1}{\sqrt{n^{2} - 1}}(n a(nt) + b(t))\quad\mbox{and }\\
    B(t) &= \frac{1}{\sqrt{n^{2} - 1}}(n x(nt) + y(t)),
  \end{align*}
  the unique solution of Problem~\ref{prob:bjorling} is given by
  \begin{align*}
    f(w) &= \mathfrak{Re}(e^{iw}, -ie^{iw}, e^{inw}, -ie^{inw})\\
         &= \mathfrak{Re}(z, -iz, z^{n}, -iz^{n}),\quad\mbox{if $z =
           e^{iw}$}.
  \end{align*}
\end{example}

In fact, note that:
\begin{align*}
  f(t,0) &= \mathfrak{Re}(e^{it}, -ie^{it}, e^{int}, -ie^{int}) =
           c(t)\mbox{ and}\\
  \frac{\partial f}{\partial w}(t,0)
         &= i(e^{iw}, -ie^{iw}, ne^{inw}, -ine^{inw})\vert_{(t,0)} = c'(t) - i d'(t).
\end{align*}

We need to check if this $d'(t)$ has the right signal. It suffices to
verify this at $t=0$:
\[(n^{2} - 1)\mathfrak{X}(c'(0),A(0),B(0)) =
  \left\vert
    \begin{matrix}
      e_{1} & e_{2} & -e_{3} & -e_{4} \\ 0 & 1 & 0 & n \\
      n & 0 & 1 & 0 \\ 0 & n & 0 & 1
    \end{matrix}
  \right\vert = (n^{2}-1)(e_{1} + ne_{3}).\] So we have
$d'(t) = -(a(t) + n b(nt))$, as expected.

\begin{remark}
  It is well know that an arc-length parametric regular and \emph{good}
  curve $c(t) = f(u(t),v(t))$ of a positive (respec. negative)
  parametric surface $(M,f)$ is a (pre-)geodesic line if, and only if,
  its normal vector field lies on the normal bundle $N_{c}f(M)$ along
  this curve. For asymptotic lines, its normal vector fields is positive
  (respec. negative) and lies on the tangent bundle along the curve.
\end{remark}

On Example \ref{exa}, the negative curve
\[\alpha(t) = \frac{1}{2}\left(\cos t, \sin t, \frac{\cos 2t}{2},
    \frac{\sin 2t}{2}\right)\] is such that $\lpr{c''(t)}{c''(t)} = 0$,
for all $t \in \real$. This curve corresponds to $u^{2} + v^{2} =
1/4$. Condintions for a curve $c(t)$ in the triad for the Björling
problem to be a geodesic or an asymptotic line are in the

\begin{theor}
  Let $c(t)$ be a positive arc-length parametric good curve with Frenet
  frame $\{T(t),N_{1}(t),N_{2}(t),N_{3}(t)\}.$ If the first normal
  $N_{1}(t)$ and $V(t) \in \{N_{2}(t),N_{3}\}$ are negative vector
  fields along this curve, then the solution $(M,f)$ of the
  Problem~\ref{prob:bjorling} for the triad $(c(t),N_{1}(t),V(t))$ has
  the curve $c(t) = f(t,0)$ as a geodesic line.

  On the other hand, if $N_{1}(t)$ is a positive vector field along the
  curve $c(t)$, then the solution $(M,f)$ of the
  Problem~\ref{prob:bjorling} for the triad $(c(t),N_{2}(t),V(t))$ has
  the curve $c(t) = f(t,0)$ as an asymptotic line.
\end{theor}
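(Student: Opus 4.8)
The plan is to reduce both claims to the curvature tests recalled in the Remark that precedes the statement: an arc-length good curve $c$ on a spacelike surface $(M,f)$ is a geodesic if and only if its curvature vector $c''(t)$ lies in the normal bundle $N_{c(t)}f(M)$, and it is asymptotic if and only if $c''(t)$ is a nonzero positive (respec.\ negative) vector lying in the tangent plane $T_{c(t)}f(M)$. Everything then comes down to locating $c''$ relative to the tangent and normal planes produced by the Schwarz formula. I would first record two structural facts. Since $c$ is arc-length parametrized, $\langle c',c'\rangle$ is constant, so $c''\perp c'=T$; by the defining property of the first normal in the good-curve frame this forces $c''(t)$ to be a scalar multiple of $N_1(t)$. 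Second, by the Existence Lemma the Schwarz solution satisfies $f_u(t,0)=c'(t)=T(t)$ and $f_v(t,0)=\mathfrak{X}(c'(t),A(t),B(t))$ along $c$, so the tangent plane along the curve is $\Span\{T,\mathfrak{X}(T,A,B)\}$ while the normal bundle is, by construction, $N_{c(t)}f(M)=[A(t)+iB(t)]=\Span\{A(t),B(t)\}$.

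For the geodesic assertion the triad is $(c,A,B)=(c,N_1,V)$, so the normal bundle along $c$ equals $\Span\{N_1,V\}$. Since $c''$ is parallel to $N_1$ and $N_1\in\Span\{N_1,V\}$, the curvature vector lies in the normal bundle, and the Remark gives at once that $c$ is a geodesic. It only remains to check admissibility of the data: with $N_1,V$ orthonormal and both negative one has $\ll N_1+iV,N_1+iV\gg=0$ and $\ll N_1+iV,\overline{N_1+iV}\gg=-2<0$, so $[N_1+iV]\in Q^2_{neg}$, exactly the condition required of the triad for a positive curve in Problem~\ref{prob:bjorling}.

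For the asymptotic assertion the triad is $(c,N_2,V)$; since $\{N_2,V\}$ must be an orthonormal pair this forces $V=N_3$, so the tangent plane along $c$ is $\Span\{T,\mathfrak{X}(T,N_2,N_3)\}$. The crux is the identity $\mathfrak{X}(T,N_2,N_3)=\kappa\,N_1$ for some nonzero real $\kappa$: since $\{T,N_1,N_2,N_3\}$ is an orthonormal frame, $\mathfrak{X}(T,N_2,N_3)$ is orthogonal to $T,N_2,N_3$ and hence proportional to the remaining vector $N_1$. Consequently $f_v=\mathfrak{X}(T,N_2,N_3)$ is parallel to $N_1$, and therefore so is $c''$; thus $c''$ lies in $\Span\{T,N_1\}=\Span\{T,f_v\}$, the tangent plane of the surface. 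As by hypothesis $N_1$ is a positive vector field, $c''$ is positive, and the Remark yields that $c$ is an asymptotic line. Admissibility again follows from the signature: with $T,N_1$ positive, the $(-,-,+,+)$ signature of $\semi$ forces $N_2,N_3$ to be negative, so $[N_2+iN_3]\in Q^2_{neg}$ and $f_v\parallel N_1$ is the required positive tangent direction.

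The step needing genuine care is the cross-product identity $\mathfrak{X}(T,N_2,N_3)=\kappa\,N_1$. I would prove it from the defining relation $\langle\mathfrak{X}(T,N_2,N_3),v\rangle=\omega(T,N_2,N_3,v)$: testing $v$ against each frame vector shows the right-hand side vanishes for $v\in\{T,N_2,N_3\}$ and is nonzero for $v=N_1$, which pins down $\mathfrak{X}(T,N_2,N_3)$ up to the scalar $\kappa$; alternatively one reads it off the coordinate formula $\mathfrak{X}(x,y,z)=(\Delta_{234},-\Delta_{134},-\Delta_{124},\Delta_{123})$ evaluated in the frame. The only subtlety is tracking the causal/orientation factor $\kappa$ so that $f_v=\mathfrak{X}(T,A,B)$ agrees with the orientation convention under which $\mathfrak{X}(f_u,A,B)=f_v$; but since any nonzero multiple of the positive vector $N_1$ is again positive and lies in $\Span\{T,N_1\}$, both the positivity and the tangency conclusions are insensitive to the sign of $\kappa$, and the two statements follow from the Remark without further computation.
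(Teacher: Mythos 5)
Your proof is correct and takes the route the paper itself intends: the paper states this theorem with no written proof, placing it immediately after the Remark containing the geodesic/asymptotic criteria, and your argument supplies exactly the details left to the reader --- that the Schwarz solution has tangent plane $\Span\{T,\mathfrak{X}(T,A,B)\}$ and normal bundle $[A+iB]$ along $c$, that $c''\parallel N_{1}$ for an arc-length good curve, that $\mathfrak{X}(T,N_{2},N_{3})\parallel N_{1}$, and the signature bookkeeping showing each triad lands in $Q^{2}_{neg}$ as Problem~\ref{prob:bjorling} requires. Nothing in your write-up deviates from the paper's framework; it simply makes the omitted verification explicit.
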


Considering the cycles given in Claim~\ref{claim:cycles}, that is
$c(t) = a(t) + b(nt)$, and the normal distribution
$\D(t)=[N_{2}(t),N_{3}(t)]$ we have

\begin{example}
The parametric surface given by 
$$f(w) = c(0) + \mathfrak{Re} \int_{0}^{w}\left(c'(\xi) - i \frac{1}{\sqrt{1 + n^{2}}} c''(\xi) \right) d\xi$$ 
is a solution of the Problem 3.3 such that the curve $c(t) = f(t,0)$ is a asymptotic line of this new surface $f(M)$.
\end{example} 

\subsection{A class of Helices}
For positive real numbers $0 < b < a < \sqrt{b}$, we take in $\semi$ the curve
$$\alpha(s) = \left(b \cos \frac{s}{\sqrt{a^{2} - b^{2}}},b \sin \frac{s}{\sqrt{a^{2} - b^{2}}}, 
\cos \frac{as}{\sqrt{a^{2} - b^{2}}}, \sin \frac{as}{\sqrt{a^{2} - b^{2}}} \right).$$ 

Since $\lpr{\alpha'}{\alpha'} = 1$ and
$\lpr{\alpha''}{\alpha''} = \dfrac{a^{4} - b^{2}}{(a^2 - b^2)^2} <0$,
this is a positive curve with negative normal vector field along
$\alpha(s)$. Let $t=\dfrac{s}{\sqrt{a^2-b^2}}$ to shorten notation. We
have
\begin{align*}
  T(s) &= \frac{1}{\sqrt{a^{2} - b^{2}}}\left(-b \sin t,b \cos
         t,-a \sin at,a \cos at\right),\\
  N(s) &= \frac{-1}{\sqrt{b^{2} - a^{4}}}\left(b \cos t,b \sin t,
         a^{2} \cos at, a^{2} \sin at\right),\\
  N'(s) &= \frac{-1}{\sqrt{b^{2} - a^{6}}}\left(-b \sin t, b \cos
          t, -a^{3} \sin at, a^{3} \cos
          at\right).
\end{align*}

Let us define the following set of unit and mutually orthogonal curves:
\begin{align*}
  x(t) &= (\cos t,\sin t,0,0),\  y(t) = x'(t), \\
  \eta(t) &= (0,0,\cos at,\sin at),\mbox{ and } \xi(t) = \frac{1}{a}\eta'(t).
\end{align*}

In this way we write
\[\alpha' = b y(t) + a \xi(t),\ \alpha'' = - b x(t) - a^{2}
  \eta(t)\mbox{ and }\alpha''' = -b y(t) - a^{3} \xi(t).\] The exterior
product of these vectors give us
\[V(t) =
  \left\vert
    \begin{matrix}
      x(t) & y(t) & -\eta(t) & -\xi(t) \\
      0   & -b    &  0  &-a \\ 
      b    & 0    &  a^{2} & 0 \\
      0    & b    &  0  & a^{3} 
    \end{matrix}
  \right\vert = (ba^{3}(1 - a^{2}) x(t) + (b^{2}a(1 - a^{2}) \eta(t).\]
obtaining the third normal vector$V(t)=\sqrt{b^{2} - a^{4}}N_3 = a^{2} x(t) + b \xi(t)$.

\begin{example}
  For $0 < b < a < \sqrt{b}$. Given the positive helix
  \[\alpha(s) = \left(b \cos \frac{s}{\sqrt{a^{2} - b^{2}}},b \sin \frac{s}{\sqrt{a^{2} - b^{2}}}, 
      \cos \frac{as}{\sqrt{a^{2} - b^{2}}}, \sin \frac{as}{\sqrt{a^{2} -
          b^{2}}} \right),\] and taking the positive field given by its
  third normal
  \[N_3(s) = \frac{1}{\sqrt{b^{2} - a^{4}}}\left(-a^{2} \sin t,a^{2}
      \cos t, -b \sin a t, b \cos a t\right),\] the parametric surface
  given by
  \[f(w) = \mathfrak{Re} \int_{0}^{w} (T(\xi) - i N_3(\xi)) d\xi\] is a
  solution of the Problem~\ref{prob:bjorling} such that the curve
  $\alpha(s) = f(s,0)$ is a geodesic of $f(M)$.
\end{example}

Ou last example is an isoclinic ``bi-helix'':

\begin{example} Let $b > 0$ be and, let $m$ and $n$ be two distinct
  natural numbers. Considerer the negative curve
  \[c(t) = \frac{\sqrt{1 + b^{2}}}{m}(\cos mt,\sin mt,0,0) +
    \frac{b}{n}(0,0,\cos nt,\sin nt ).\] The parametric isoclinic
  minimal surface is given by
  \[f(w)=c(0) + \int_{0}^{w} (c'(\xi) - iL(c'(\xi))) d \xi,\] where L is
  the linear operator given in (\ref{trans}).
\end{example}

\end{document}